\theoremstyle{plain}
\newtheorem{theorem}{Theorem}
\newtheorem*{lemma}{Lemma}
\newtheorem*{longquotation}{Extended Paraphrase}
\newtheorem*{proposition}{Proposition}
\newtheorem*{conjecture}{Generalized Newman Conjecture}
\newcommand{\GRHp}{\text{GRH}^+}
\newcommand{\tim}{\text{Im}}
\newcommand{\lkr}{\Lambda_{\text{Kr}}}
\theoremstyle{definition}
\newtheorem*{remark}{Remark}
\newtheorem*{definition}{Definition}
\theoremstyle{definition}
\begin{document}
\title[Notes on Low discriminants]{Notes on Low discriminants\\and the Generalized Newman Conjecture}
\author{Jeffrey Stopple}
\begin{abstract}
Generalizing work of Polya, de Bruijn and Newman, we allow the backward heat equation to deform the zeros of quadratic Dirichlet $L$-functions.  There is a real constant $\lkr$ (generalizing the de Bruijn-Newman constant $\Lambda$) such that for time $t\ge\lkr$ all such $L$-functions have all their zeros on the critical line; for time $t<\lkr$ there exist zeros off the line.  Under GRH, $\lkr\le 0$; we make the complementary conjecture $0\le \lkr$.  Following the work of Csordas \emph{et.\ al}.\ on Lehmer pairs of Riemann zeros, we use low-lying zeros of quadratic Dirichlet $L$-functions to show that $-1.13\cdot 10^{-7}<\lkr$.  In the last section we develop a precise definition of a Low discriminant which is motivated by considerations of random matrix theory.  The existence of infinitely many Low discriminants would imply $0\le \lkr$.
\end{abstract}
\email{stopple@math.ucsb.edu}\address{Mathematics Department, UC Santa Barbara, Santa Barbara CA 93106}
\keywords{Generalized Riemann hypothesis, de Bruijn-Newman constant, backward heat equation, Lehmer pair, Low discriminant, random matrix theory}
\subjclass[2000]{11M20, 11M26, 11M50,11Y35,11Y60}

\maketitle


For $-D<0$ a fundamental discriminant, and $\chi$ the Kronecker symbol, consider the $L$-functions $L(s,\chi)$.  We will assume the Generalized Riemann Hypothesis that the nontrivial zeros of $L(s,\chi)$ are on the critical line.  In fact we'll assume a little more, that also $L(1/2,\chi)\ne0$, which we will denote $\GRHp$.

We define, for  $s=1/2+it$, 
\begin{align*}
\Xi(t,\chi)\overset{\text{def.}}=&\left(\frac{D}{\pi}\right)^{(s+1)/2}\Gamma((s+1)/2)L(s,\chi),\\
=&\int_0^\infty \Phi(u,\chi)\cos(ut) \, du,
\end{align*}
where 
\begin{equation}\label{Eq:phidef}
\Phi(u,\chi)=4\sum_{n=1}^\infty\chi(n)n\exp(3u/2-n^2\pi\exp(2u)/D).
\end{equation}
Analogous to the Hardy function $Z(t)$ for the Riemann zeta function we have
\[
Z(t,\chi)=\left(\frac{D}{\pi}\right)^{it/2}\left(\frac{\Gamma(3/4+it/2)}{\Gamma(3/4-it/2)}\right)^{1/2}L(1/2+it,\chi),
\]
so that
\[
\Xi(t,\chi)=\left(\frac{D}{\pi}\right)^{3/4}\left|\Gamma(3/4+it/2)\right|Z(t,\chi).
\]

\subsection*{Low discriminants}
In 1965, Marc Low wrote a Ph.D.\ thesis \cite{Low} investigating possible real zeros of $L(s,\chi)$.  He was able to prove that
$L(s,\chi)$ has no real zeros for $-593000<-D$, with the possible exception of $-D=-115147$.
Imagine his frustration at being unable to resolve the case of discriminant $-115147$!  Watkins \cite{Watkins3} was able to extend Low's results to $-3\cdot 10^8<-D$ without exceptions.
\begin{figure}
\begin{center}
\includegraphics[scale=.8, viewport=0 0 350 400,clip]{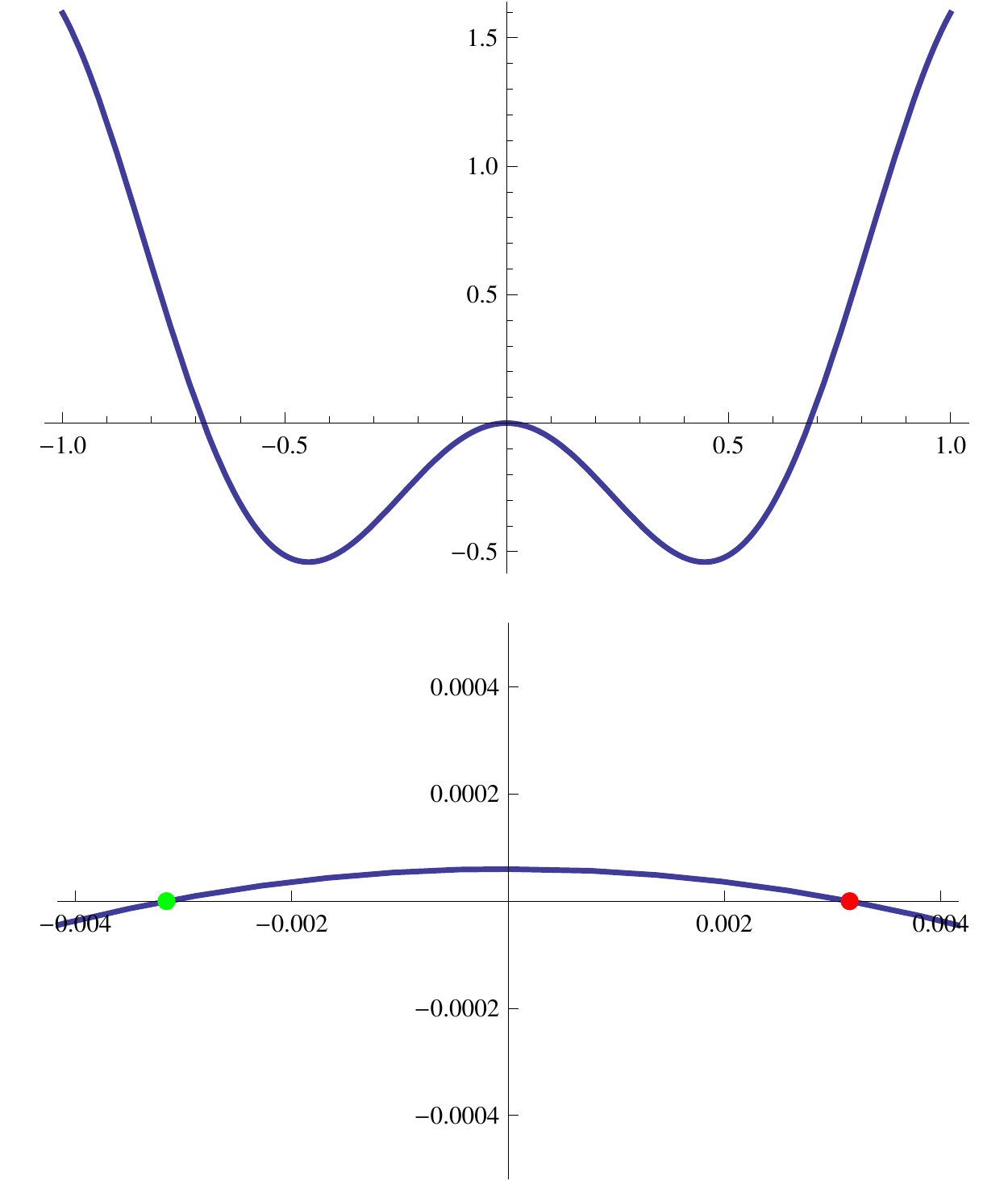}
\caption{$Z(t,\chi)$ for $-D=-115\, 147$.}
\label{F:1}
\end{center}
\end{figure}

The graph of $Z(t,\chi)$ for $-D=-115147$ is shown in Figure \ref{F:1}.  The value at $t=0.$ is $0.0000603627\ldots$\ \ The first zero \cite{W} is at $t=0.0031576\ldots$ , extremely small for a discriminant of this size.  (On average the lowest zero is at $1/\log(D/2\pi)$, which in this case works out to be $\approx0.1$)

Since $Z(t,\chi)$ is an even function of $t$, $Z^\prime(0,\chi)=0$ and so
\[
\left(\log Z\right)^{\prime\prime}(0,\chi)=\frac{Z^{\prime\prime}(0,\chi)}{Z(0,\chi)}.
\]
We deduce from the Hadamard product for $\Xi(t,\chi)$ and \emph{Mathematica} evaluation of the derivative of the digamma function that
\begin{equation}\label{Eq:logzdoubleprime}
\left(\log Z\right)^{\prime\prime}(0,\chi)=-\sum_\gamma\frac{1}{\gamma^2}+0.635467\ldots ,
\end{equation}
where the sum is over the zeros $\gamma$ of $Z(t,\chi)$, or equivalently zeros $1/2+i\gamma$ of $L(s,\chi)$.
On $\GRHp$, the $\gamma$ are real and nonzero so $-\sum_\gamma 1/\gamma^2<0$.  Thus, 
\[
\left(\log Z\right)^{\prime\prime}(0,\chi)<0
\]
as soon as there are enough low-lying zeros for the sum to overcome the positive term $0.635467\ldots$ above.  
One can easily compute, using Michael Rubinstein's package \texttt{lcalc}\footnote{see  \href{http://oto.math.uwaterloo.ca/~mrubinst/L_function_public/CODE/}{\texttt{http://oto.math.uwaterloo.ca/~mrubinst}}}, sufficient zeros for quadratic character $L$-functions with  $-10^4<-D$ to show that in fact
\[
\left(\log Z\right)^{\prime\prime}(0,\chi)<0\quad\text{for}\quad -10^4<-D<-119.
\]
On the other hand, from a theorem of Siegel  \cite[Theorem IV]{Siegel} it follow that there exists a universal constant $C_1$ such that for $D>C_1$, unconditionally
\begin{equation}\label{Eq:Siegel}
\gamma_1(-D)<\frac{4}{\log\log\log D},
\end{equation}
or
\[
\frac{1}{\gamma_1^2}>\frac{(\log\log\log D)^2}{16}.
\]


On $\GRHp$ we have that $Z(t,\chi)>0$ and, for $D$ sufficiently large, $Z^{\prime\prime}/Z(0,\chi)<0$ so $Z^{\prime\prime}(0,\chi)<0$.  Thus
\begin{proposition}
On $\GRHp$, for $D$ sufficiently large, $Z(0,\chi)$ is a positive local maximum.  We conjecture that $-D<-119$ is sufficient.
\end{proposition}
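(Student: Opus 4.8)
The plan is to verify the three conditions under which $t=0$ is a positive strict local maximum of $Z(t,\chi)$, and then to invoke the second-derivative test. Two of them are essentially free. First, $Z(t,\chi)$ is an even function of $t$, so $Z'(0,\chi)=0$. Second, on $\GRHp$ the value $Z(0,\chi)=L(1/2,\chi)$ is positive: the nontrivial zeros of $L(s,\chi)$ all lie on $\tre(s)=1/2$, and the ``$+$'' in $\GRHp$ says $L(1/2,\chi)\neq0$, so $L(s,\chi)$ vanishes nowhere on the half-line $(0,\infty)$; being continuous there and tending to $1$ as $s\to+\infty$, it is positive throughout $(0,\infty)$, in particular at $s=1/2$. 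So the one substantive point is $Z''(0,\chi)<0$.

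For that I would combine the Hadamard-product evaluation \eqref{Eq:logzdoubleprime} with Siegel's bound \eqref{Eq:Siegel}, exactly as indicated in the discussion above. On $\GRHp$ every zero $\gamma$ of $Z(t,\chi)$ is real and nonzero, so in \eqref{Eq:logzdoubleprime} the sum $\sum_\gamma\gamma^{-2}$ is a sum of strictly positive terms; keeping only the contribution of the lowest zero and inserting \eqref{Eq:Siegel},
\[
\sum_\gamma\frac{1}{\gamma^2}\;\ge\;\frac{1}{\gamma_1(-D)^2}\;>\;\frac{(\log\log\log D)^2}{16}\qquad(D>C_1).
\]
Choosing $D$ large enough that, in addition, $(\log\log\log D)^2/16>0.635467\ldots$, equation \eqref{Eq:logzdoubleprime} yields $(\log Z)''(0,\chi)<0$; since $Z'(0,\chi)=0$ this quantity equals $Z''(0,\chi)/Z(0,\chi)$, and $Z(0,\chi)>0$, so $Z''(0,\chi)<0$. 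Together with $Z'(0,\chi)=0$ and $Z(0,\chi)>0$, the second-derivative test completes the proof.

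The step I expect to be the real obstacle is not in the argument above --- which merely assembles facts already in hand --- but in replacing ``sufficiently large'' by the conjectured explicit threshold $-D<-119$. The estimate coming out of \eqref{Eq:Siegel} only starts to help once $\log\log\log D$ exceeds roughly $\sqrt{16\cdot0.635\ldots}\approx3.19$, i.e.\ for $D$ of triple-exponential size, and because Siegel's $C_1$ is ineffective no explicit bound emerges from this route at all. To reach $-D<-119$ one would instead need, for \emph{every} fundamental discriminant $D>119$, a first zero $\gamma_1(-D)$ small enough (or, failing that, enough low-lying zeros in aggregate) that $\sum_\gamma\gamma^{-2}>0.635467\ldots$; this is confirmed by direct computation of zeros for $-10^4<-D<-119$, and, since Siegel's bound is expected to be far from sharp, it is natural to conjecture it holds for all $D>119$. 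Making that conjecture precise and giving evidence for it is the role of the notion of a Low discriminant developed in the last section.
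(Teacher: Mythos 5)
Your proof is correct and follows the same route as the paper: evenness gives $Z'(0,\chi)=0$, $\GRHp$ gives $Z(0,\chi)=L(1/2,\chi)>0$, and the Hadamard-product identity \eqref{Eq:logzdoubleprime} combined with Siegel's bound \eqref{Eq:Siegel} forces $(\log Z)''(0,\chi)=Z''(0,\chi)/Z(0,\chi)<0$ for $D$ large, so $Z''(0,\chi)<0$. Your closing remarks on the ineffectivity of Siegel's constant and the computational verification for $-10^4<-D<-119$ also match the paper's discussion of why the explicit threshold remains conjectural.
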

If GRH fails by reason of a Landau-Siegel zero, we would expect $Z(0,\chi)<0$, and, from (\ref{Eq:logzdoubleprime}), that $Z^{\prime\prime}/Z(0,\chi)>0$.  So  again $Z^{\prime\prime}(0,\chi)<0$, a negative local maximum.  Thus the example in Figure \ref{F:1} represents a near violation of GRH, and we will informally call such examples \lq Low discriminants\rq\  in analogy with Lehmer pairs for $\zeta(s)$.  A precise definition will be made in the last section below.  (There are nineteen fundamental discriminants $-119\le -D\le -3$ such that $Z(0,\chi)$ is a positive local \emph{minimum}.  This is simply analogous to the fact that the Hardy function $Z(t)$ does have a negative local maximum, at $t = 2.47575\ldots$)\ \ 

\subsection*{De Bruijn and Newman}

Since we are going to introduce the heat equation we have a clash of notations: $t=\tim(s)$ for $L(s,\chi)$ versus $t$ representing time in the heat equation.  So from now on we will write $\Xi(x,\chi)$ instead of $\Xi(t,\chi)$.  Following Polya \cite{Polya} and de Bruijn \cite{dB} we introduce a deformation parameter $t$:
\[
\Xi_t(x,\chi)=\int_0^\infty \exp(t u^2)\Phi(u,\chi)\cos(ux)\, du,
\]
so that for $t=0$, $\Xi_0(x,\chi)$ is just $\Xi(x,\chi)$.  This function satisfies the \textsc{backward heat equation}
\[
\frac{\partial \Xi}{\partial t}+\frac{\partial^2 \Xi}{\partial x^2}=0.
\]
We have that $\Xi_t(x, \chi)$ is an entire, even function.  Since $\Xi_0(x,\chi)$ is of order one, and
\[
\Xi_t(x)=\left(\sum_{m=1}^\infty\frac{(-1)^mt^m}{m!}\left(\frac{d}{dx}\right)^{2m}\right)\Xi_0(x),
\]
\cite[Theorem 11.4]{vanderSteen} gives that $\Xi_t(x,\chi)$ is of order at most one.
Since $\Phi(u,\chi)$ has  doubly exponential decay, \cite[Theorem 13]{dB} applies to $\Xi_t(x,\chi)$ and we have an analog of the theorem of de Bruijn for the Riemann zeta function:
\begin{enumerate}
\item For $t\ge 1/2$, $\Xi_t(x,\chi)$ has only real zeros.
\item If for some real $t$, $\Xi_t(x,\chi)$ has only real zeros, then $\Xi_{t^\prime}(x,\chi)$ also has only real zeros for any $t^\prime\ge t.$
\end{enumerate}
For this same reason, \cite[Theorem 3]{Newman} applies to $\Xi_t(x,\chi)$ and we have an analog of the theorem of Newman:  There exists a real constant $\Lambda_{-D}$, $-\infty<\Lambda_{-D}\le 1/2$, such that
\begin{enumerate}
\item $\Xi_t(x,\chi)$ has only real zeros if and only if $t\ge\Lambda_{-D}$.
\item $\Xi_t(x,\chi)$ has some complex zeros if $t<\Lambda_{-D}$.
\end{enumerate}\begin{definition}
We define
\[
\lkr=\sup\left\{\Lambda_{-D}\, |\, -D\text{ fundamental}\right\}.
\]
\end{definition}
\begin{conjecture}
Analogous to Newman's conjecture for the Riemann zeros, we conjecture that $\lkr\ge0$.
\end{conjecture}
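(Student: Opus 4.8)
The plan is to prove $\lkr\ge0$ by producing infinitely many fundamental discriminants $-D$ — the ``Low discriminants'' of the last section — for which $\Lambda_{-D}$ is forced close to $0$, with the lower bound tending to $0$ along the family; since $\lkr=\sup_{-D}\Lambda_{-D}$, this yields the conjecture. The engine is the method of Csordas, Smith and Varga for the Riemann zeros, available here because $\Phi(u,\chi)$ in \eqref{Eq:phidef} has doubly exponential decay, so that de Bruijn's and Newman's theorems — and with them the dynamics of the zeros of $\Xi_t(x,\chi)$ under the backward heat flow — transfer verbatim: an abnormally close pair of real zeros at $t=0$ keeps $\Lambda_{-D}$ from being very negative.

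\textbf{Step 1 (zero dynamics).} Differentiating $\Xi_t(x_k(t),\chi)=0$, using $\partial_t\Xi+\partial_x^2\Xi=0$ and the Hadamard product for $\Xi_t(\cdot,\chi)$, one finds that while the zeros are real and simple they obey the Calogero-type flow $\dot x_k=2\sum_{j\ne k}(x_k-x_j)^{-1}$, oriented so that \emph{decreasing} $t$ drives neighboring zeros together. Since $\Xi_t(\cdot,\chi)$ is even the zeros occur in pairs $\pm x_k(t)$; for the central pair $\pm x_1=\pm\gamma_1(-D)$ this reads
\[
\dot\gamma_1=\frac{1}{\gamma_1}\Bigl(1-4\gamma_1^2\!\sum_{m\ge2}\frac{1}{\gamma_m^2-\gamma_1^2}\Bigr).
\]
If the bracketed factor is positive at $t=0$, then as $t$ decreases $\gamma_1$ decreases, the pair collides at the origin at some $t_\ast<0$ where $\Xi_{t_\ast}(\cdot,\chi)$ gains a double zero, and a complex pair emerges for $t<t_\ast$; hence $\Lambda_{-D}\ge t_\ast$. (Any other abnormally close pair of real zeros would do; the central one is the natural choice for a ``Low discriminant''.)

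\textbf{Step 2 (Lehmer-type bound).} Set $g(D)=4\gamma_1(-D)^2\sum_{m\ge2}(\gamma_m^2-\gamma_1^2)^{-1}$; the sum here is, up to the central pair, the one in \eqref{Eq:logzdoubleprime}, so $g(D)\le c_0$ is an explicitly checkable condition on $\gamma_1(-D)$ and $(\log Z)''(0,\chi)$. Following Csordas--Smith--Varga I would show: when $g(D)\le c_0$ for a suitable absolute $c_0<1$, one may integrate the flow of Step 1 backward — bounding the displacement of the remaining zeros over $[t_\ast,0]$ in terms of $g(D)$ itself — to conclude that the central pair cannot collide before time $-c_1\gamma_1(-D)^2 g(D)$, so
\[
\Lambda_{-D}\ \ge\ -\,c_1\,\gamma_1(-D)^2\,g(D)
\]
with $c_1$ absolute. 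This is exactly the estimate yielding $-1.13\cdot10^{-7}<\lkr$ from $-D=-115147$, where $\gamma_1=0.0031576\ldots$ against a mean spacing $\approx0.1$ makes $g(D)$ minuscule. \emph{A discriminant is Low precisely when this smallness holds}, i.e.\ when the lowest zero is far below the mean spacing $\asymp1/\log D$ — the regime in which $(\log Z)''(0,\chi)$ is very negative.

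\textbf{Step 3 (infinitude of Low discriminants; the main obstacle).} Because $\sum_{m\ge2}(\gamma_m^2-\gamma_1^2)^{-1}\ge\#\{m\ge2:\gamma_m\le1\}\gg\log D$, the inequality $g(D)\le c_0$ forces $\gamma_1(-D)^2\ll1/\log D$; thus along any infinite family of Low discriminants one automatically has $D\to\infty$, $\gamma_1(-D)\to0$, and hence $\Lambda_{-D}\ge-c_1\gamma_1(-D)^2 g(D)\to0$, giving $\lkr\ge0$. The whole problem therefore reduces to exhibiting infinitely many Low discriminants, and this is the obstacle: Siegel's bound \eqref{Eq:Siegel}, $\gamma_1(-D)<4/\log\log\log D$, shrinks the lowest zero far too slowly to beat $1/\log D$, whereas what is needed is $\gamma_1(-D)<\varepsilon_0/\log D$ infinitely often — a statement about anomalously small central values $L(1/2,\chi)$ in the symplectic family of quadratic Dirichlet $L$-functions. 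The random-matrix model of the last section predicts a positive proportion $\asymp\varepsilon_0^{3}$ of such discriminants, hence infinitely many; but turning this into a theorem would require control of low-lying zeros (equivalently, of small central values) in that family beyond what current methods supply, which is why $\lkr\ge0$ stays conjectural. Steps 1--2, in contrast, are routine transcriptions of the $\zeta(s)$ arguments and already give the unconditional $\lkr>-1.13\cdot10^{-7}$.
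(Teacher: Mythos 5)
The statement you were asked about is a conjecture; the paper does not prove it, and your proposal (correctly) does not either. What you outline is essentially the paper's own reduction. Your Step 1 is the paper's Lemmas 2.1 and 2.4 (the flow $x_k'=\sum_{j\ne k}'2/(x_k-x_j)$ and the ODE $x_1'=1/x_1-f(t)x_1$, with $f(0)=4\sum_{m\ge2}(\gamma_m^2-\gamma_1^2)^{-1}$ exactly as you wrote). Your Step 2 is the paper's Theorem 1, whose hypothesis $5\gamma_1^2g(0)<1$ is your smallness condition. Your Step 3 is the paper's Theorem 2: infinitely many such discriminants force $\lkr\ge0$. (The paper gets $\gamma_1(-D)\to0$ along the family from Siegel's bound \eqref{Eq:Siegel}; your alternative, that the criterion itself forces $\gamma_1^2\ll1/\log D$ because $\sum_{m\ge2}(\gamma_m^2-\gamma_1^2)^{-1}\gg\log D$, also works and does not need Siegel.) You also correctly locate the genuine open gap, namely exhibiting infinitely many Low discriminants, i.e.\ infinitely many $-D$ with $\gamma_1(-D)$ well below the mean spacing $1/\log(D/2\pi)$; the paper supports this only by the random matrix heuristic of its last section, which is precisely why the statement remains conjectural.

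Three corrections to the details. First, the bound the CSV machinery actually delivers is $\Lambda_{-D}>\lambda\approx-\tfrac12\gamma_1^2\bigl(1+O(\gamma_1^2g(0))\bigr)$, i.e.\ essentially $-\gamma_1^2/2$, not $-c_1\gamma_1^2g(D)$ with your $g(D)=\gamma_1^2f(0)$; the latter would be far stronger when $g(D)$ is tiny, and the paper's remark after Theorem \ref{Theorem1} stresses that no sharpening of the estimate $g'>-8g^2$ changes the leading term $-\gamma_1^2/2$. Second, and consistent with this, the record bound $-1.13\cdot10^{-7}<\lkr$ comes from $-D=-175990483$ (where $\gamma_1\approx0.000475$), not from $-D=-115147$, which yields only about $-5\cdot10^{-6}$. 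Third, the logic of Step 2 is inverted as written: one shows the close pair \emph{must} collide (equivalently, that the reality assumption forces $x_1(t)^2<0$) before $t$ decreases to $\lambda$; a collision at some $t_*>\lambda$ then gives $\lambda<t_*\le\Lambda_{-D}$, whereas ``cannot collide before time $\lambda$'' would give nothing. None of these affects the overall architecture, which matches the paper's, but the conjecture itself is not established by it.
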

\noindent
For the meaning of this we paraphrase by Newman's often quoted remark
\begin{quotation}
\lq\lq\emph{This new conjecture is a quantitative version of the dictum that the [Generalized] Riemann Hypothesis, if true, is only barely so.}\rq\rq
\end{quotation}
Under the GRH, $\lkr\le0$.  We are free to assume this, since its negation is $\lkr>0$ which implies the above conjecture.

In \cite{CSV} Csordas, Smith, and Varga give a precise, though somewhat technical, definition of a Lehmer pair of Riemann zeros.  Via the fact that the $t$-deformed Riemann xi function $\Xi_t(x)$ satisfies the backward heat equation, they were able to draw conclusions about the differential equation satisfied by the $k$-th gap between the zeros as the deformation parameter $t$ varies.  From this, they were able to use Lehmer pairs to give lower bounds on the de Bruijn-Newman constant $\Lambda$.  Our situation is exactly parallel, and we follow their exposition closely in the next section.

\subsection*{ODEs and the motion of the zeros}

\begin{lemma}[2.1 \cite{CSV}]  Suppose $x_0$ is a simple real zero of $\Xi_{t_0}(x,\chi)$.  Then in some open interval $I$ containing $t_0$, there is a real differentiable function $x(t)$ defined on $I$ and satisfying $x(t_0)=x_0$, such that $x(t)$ is a simple real zero of $\Xi_t(x,\chi)$ and $\Xi_t(x(t),\chi)\equiv0$ for $t\in I$.  Moreover, for $t\in I$,
\begin{equation}\label{Eq:ode}
x^\prime(t)=\frac{\Xi^{\prime\prime}_t(x(t),\chi)}{\Xi^\prime_t(x(t),\chi)}.
\end{equation}
NB: While on the left side $x^\prime$ obviously denotes derivative with respect to $t$, on the right side $\Xi^\prime$ denotes (confusingly) derivative with respect to $x$.
\end{lemma}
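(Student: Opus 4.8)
The plan is to derive this from the implicit function theorem applied to the function $F(t,x) = \Xi_t(x,\chi)$ on a suitable domain, together with a short computation using the backward heat equation to identify $x'(t)$. First I would observe that $\Xi_t(x,\chi)$ is real-analytic (indeed entire in $x$ and analytic in $t$) on a neighborhood of $(t_0,x_0)$ in $\mathbb{R}^2$; this follows from the integral representation, since $\exp(tu^2)\Phi(u,\chi)$ has the doubly-exponential decay needed to differentiate under the integral sign in both variables. Since $x_0$ is a \emph{simple} zero, $\Xi'_{t_0}(x_0,\chi)\neq 0$ (derivative with respect to $x$), so $\partial F/\partial x\neq 0$ at $(t_0,x_0)$. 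The implicit function theorem then yields an open interval $I\ni t_0$ and a differentiable (in fact analytic) function $x(t)$ with $x(t_0)=x_0$ and $F(t,x(t))\equiv 0$ on $I$; shrinking $I$ if necessary, $\Xi'_t(x(t),\chi)\neq 0$ on $I$ by continuity, so $x(t)$ remains a simple real zero. Reality of $x(t)$ is automatic because the implicit function theorem is being applied in the real-variable category.

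Next I would obtain the ODE (\ref{Eq:ode}) by differentiating the identity $\Xi_t(x(t),\chi)\equiv 0$ with respect to $t$. The chain rule gives
\[
\frac{\partial \Xi_t}{\partial t}(x(t),\chi) + \Xi'_t(x(t),\chi)\, x'(t) = 0.
\]
Now I invoke the backward heat equation $\partial \Xi/\partial t = -\partial^2 \Xi/\partial x^2$, which lets me replace $\partial \Xi_t/\partial t$ by $-\Xi''_t(x(t),\chi)$. Substituting and solving for $x'(t)$ (legitimate since $\Xi'_t(x(t),\chi)\neq 0$ on $I$) yields
\[
x'(t) = \frac{\Xi''_t(x(t),\chi)}{\Xi'_t(x(t),\chi)},
\]
which is exactly (\ref{Eq:ode}).

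I do not anticipate a genuine obstacle here; the main thing to be careful about is the regularity bookkeeping. One must justify that $\Xi_t(x,\chi)$ really is jointly smooth (and that the mixed partials can be taken under the integral, or alternatively argued from the power series $\Xi_t(x) = \sum_m \frac{(-1)^m t^m}{m!}(d/dx)^{2m}\Xi_0(x)$ already displayed in the excerpt, which converges locally uniformly by the order-one bound), so that both the implicit function theorem applies and the backward heat equation holds at the point $(t,x(t))$. A secondary point is to make sure the interval $I$ is chosen small enough that $\Xi'_t(x(t),\chi)$ stays nonzero throughout, so that $x(t)$ is simple on all of $I$ and the quotient in (\ref{Eq:ode}) is well-defined; this is just continuity of $\Xi'_t(x(t),\chi)$ in $t$ together with its nonvanishing at $t_0$. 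Since this is Lemma 2.1 of \cite{CSV} transplanted verbatim to the quadratic Dirichlet setting, and the only structural inputs are that $\Xi_t(x,\chi)$ is entire and even in $x$ and satisfies the backward heat equation — both established in the preceding discussion — the argument of Csordas, Smith and Varga carries over without change.
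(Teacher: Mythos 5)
Your proposal is correct and follows exactly the same route as the paper: existence of $x(t)$ via the implicit function theorem (using simplicity of the zero to get $\Xi'_{t_0}(x_0,\chi)\neq 0$), then differentiating $\Xi_t(x(t),\chi)\equiv 0$ in $t$ and using the backward heat equation to convert $\partial\Xi/\partial t$ into $-\Xi''_t$. The extra regularity bookkeeping you flag (joint smoothness and shrinking $I$ to keep $\Xi'_t(x(t),\chi)$ nonzero) is left implicit in the paper but is a reasonable addition.
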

\begin{proof}
The existence of $x(t)$ follows directly from the Implicit Function Theorem.  Differentiate $\Xi_t(x(t))\equiv0$ with respect to $t$ to deduce
\begin{multline*}
0=\frac{d}{dt}\Xi_t(x(t),\chi)=\frac{\partial}{\partial t}\left.\Xi_t(x,\chi)\right|_{x=x(t)}+x^\prime(t)\frac{\partial}{\partial x}\left.\Xi_t(x,\chi)\right|_{x=x(t)}\\
=-\frac{\partial^2}{\partial x^2}\left.\Xi_t(x,\chi)\right|_{x=x(t)}+x^\prime(t)\frac{\partial}{\partial x}\left.\Xi_t(x,\chi)\right|_{x=x(t)}
\end{multline*}
by the backward heat equation.
\end{proof}

\begin{figure}
\begin{center}
\includegraphics[scale=.8, viewport=0 0 375 225,clip]{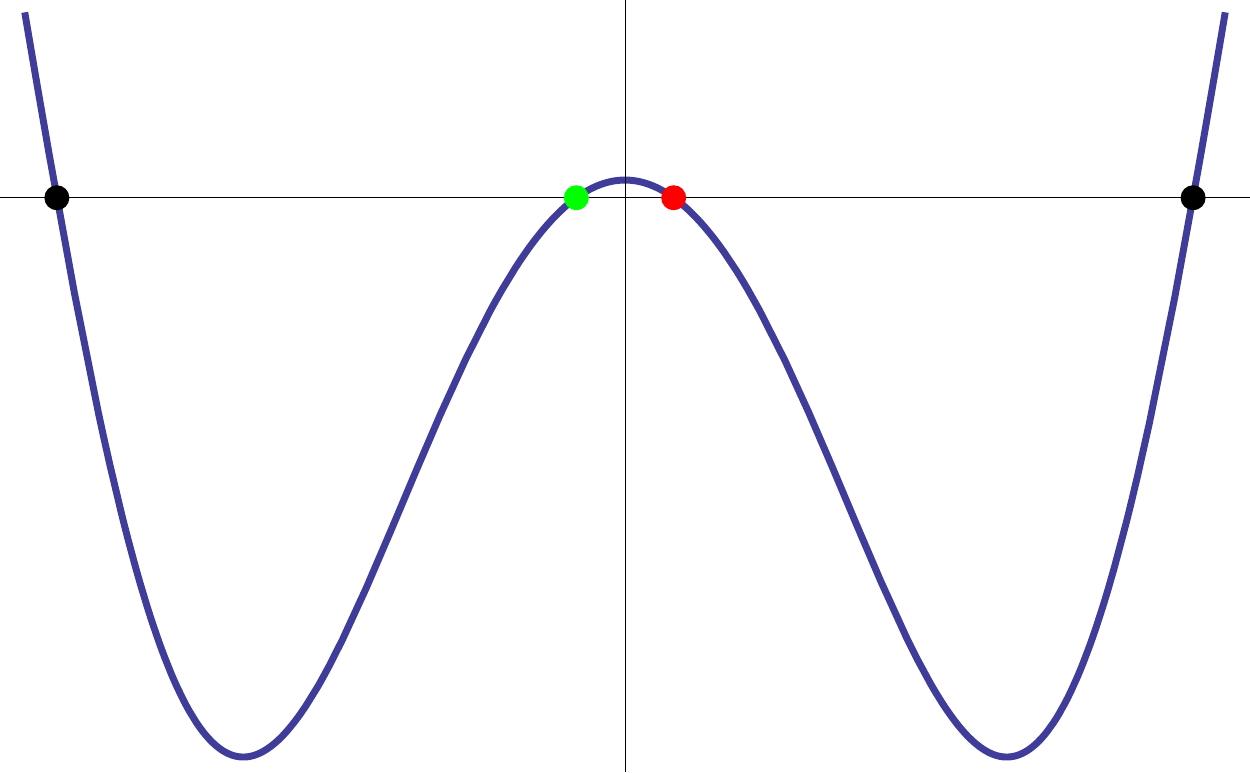}
\caption{Hypothetical sketch of $\Xi_{t_0}(x,\chi)$, with two close zeros marked in red and green.}\label{F:2}
\end{center}
\end{figure}

\begin{longquotation}[p.111-112,\cite{CSV}]\lq\lq The significance of Lemma 2.1 is that the movement of the simple real zero $x(t)$ of $\Xi_t(x,\chi)$ is locally determined solely by the ratio
\[
\frac{\Xi^{\prime\prime}_t(x(t),\chi)}{\Xi^\prime_t(x(t),\chi)}.
\]
To illustrate the result of Lemma 2.1, consider the graph of $\Xi_{t_0}(x,\chi)$ in Figure \ref{F:2}, where $\Xi_{t_0}(x,\chi)$ has two close\footnote{i.e., both in the same interval $I$ of the Lemma.  \emph{A priori} it is not obvious that such close zeros exist for any $D$!}  zeros $x_{-1}(t_0)$ and $x_{1}(t_0)$, and the remaining zeros are widely separated from $x_{-1}(t_0)$ and $x_{1}(t_0)$.  From the graph we see that
\[
\Xi^{\prime\prime}_t(x(t),\chi)<0
\]
on an interval containing $[x_{-1}(t_0), x_{1}(t_0)]$, and
\[
\Xi^{\prime}_{t_0}(x_{-1}(t_0),\chi)>0\quad\text{ while }\quad \Xi^{\prime}_{t_0}(x_{1}(t_0),\chi)<0.
\]
Using (\ref{Eq:ode}) we conclude from Figure \ref{F:2} that 
\begin{equation}
x^\prime_{-1}(t_0)<0\quad\text{ while }\quad x^\prime_{1}(t_0)>0,
\end{equation}
and this indicates that, on decreasing $t$, $x_{-1}(t)$ increases while $x_{1}(t)$ decreases, i.e., these two zeros move towards one another as $t$ decreases from $t_0$, and similarly, these two zeros move away from one another as $t$ increases from $t_0$.\rq\rq
\end{longquotation}

What can we deduce if two roots coalesce?
\begin{lemma} Suppose for some real $t_0$ and $x_0$ we have a double root, i.e,
\[
\Xi_{t_0}(x_0,\chi)=0=\Xi^\prime_{t_0}(x_0,\chi)
\]
Then $t_0\le \Lambda_{-D}$.  For any $t$ with $\Lambda_{-D}<t$, the zeros are not only real but also simple.
\end{lemma}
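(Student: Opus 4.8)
The plan is to prove the contrapositive of the first assertion: if $\Lambda_{-D}<t_0$, then $\Xi_{t_0}(x,\chi)$ cannot have a double root. Since by Newman's theorem (part (1) above) the condition $t_0\ge\Lambda_{-D}$ is equivalent to $\Xi_{t_0}(x,\chi)$ having only real zeros, it suffices to show that whenever $t_0>\Lambda_{-D}$ all real zeros are in fact simple; the existence of a double root at $(t_0,x_0)$ would then force $t_0\le\Lambda_{-D}$, which is the statement. So I would reduce everything to the second sentence of the Lemma, and derive the first sentence as an immediate corollary.

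To establish simplicity for $t>\Lambda_{-D}$, I would argue by propagating a double root backward in $t$. Suppose $\Xi_{t_0}(x_0,\chi)=\Xi'_{t_0}(x_0,\chi)=0$ with $t_0>\Lambda_{-D}$. Pick $t_1$ with $\Lambda_{-D}<t_1<t_0$. On the interval $[t_1,t_0]$ all zeros of $\Xi_t(x,\chi)$ are real. The idea is that a real double root cannot appear \emph{out of nowhere} as $t$ decreases from a region where zeros are real and (generically) simple: near $t_0$, two real simple zeros $x_{-1}(t)$ and $x_1(t)$ governed by the ODE (\ref{Eq:ode}) of Lemma 2.1 must, just above $t_0$, be distinct real zeros that collide exactly at $t=t_0$. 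Using the heat-equation dynamics exactly as in the Extended Paraphrase from \cite{CSV}: near a would-be collision point $x_0$ we have $\Xi''_t<0$ (or $>0$) on a neighborhood, the left zero has $\Xi'>0$ and the right zero has $\Xi'<0$, so by (\ref{Eq:ode}) the gap $x_1(t)-x_{-1}(t)$ is strictly \emph{increasing} in $t$ near $t_0$; hence for $t$ slightly \emph{less} than $t_0$ the two zeros would have crossed, i.e.\ the gap would be negative, contradicting reality of the zeros on $[t_1,t_0]$. (One handles the degenerate case where the gap's $t$-derivative vanishes at $t_0$ by looking at higher-order behavior, or more cleanly by the standard fact that for $t>\Lambda_{-D}$ the zero-counting function is locally constant so zeros cannot merge or split.) The cleanest packaging is: on $(\Lambda_{-D},\infty)$ the number of zeros of $\Xi_t(\cdot,\chi)$ in any fixed box is independent of $t$ by Hurwitz/argument principle applied to the entire function $\Xi_t$, since no complex zeros can enter the real axis (there are none) and a multiple real zero would require a collision that the monotonicity of the gap forbids.

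An alternative and perhaps more robust route avoids the qualitative ODE picture: appeal directly to the de Bruijn monotonicity (part (2) of the de Bruijn analog) together with a strict-inequality refinement. A double real zero of $\Xi_{t_0}$ would, under the strict version of de Bruijn's theorem, persist or worsen as one \emph{further decreases} $t$ — more precisely, de Bruijn's Theorem 13 gives not merely reality but that zeros become \emph{strictly} simple immediately after passing the threshold, because the backward heat flow run in reverse (forward heat flow) strictly separates coincident zeros. Concretely, if $\Xi_{t_0}$ has a double zero, then for $t$ slightly above $t_0$ the Gaussian smoothing $\Xi_t = e^{t\,d^2/dx^2}\Xi_{t_0}$ still has (by continuity of roots and the fact that forward heat flow preserves the real-zero locus while resolving multiplicities) at least the same local root-count, so the double zero cannot have been created by flowing down from a simple-zero configuration at $t>t_0$; this again pins $t_0$ below or at $\Lambda_{-D}$. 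Both routes need the same input, so I would present whichever is shorter.

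The main obstacle is making the collision/monotonicity argument rigorous at the exact instant of coalescence, where Lemma 2.1 fails (the zero is no longer simple, so the Implicit Function Theorem does not apply at $t_0$ itself). The honest fix is to work on the open interval $t\in(\Lambda_{-D},t_0)$ where Lemma 2.1 \emph{does} apply to each of the two nearby real zeros, track the gap $g(t)=x_1(t)-x_{-1}(t)>0$ there, show $g'(t)>0$ via (\ref{Eq:ode}) and the sign pattern of $\Xi'_t,\Xi''_t$ near $x_0$ (which hold on a full neighborhood by continuity once they hold at $(t_0,x_0)$), and conclude $\lim_{t\downarrow t_0}g(t)\le 0$ — forcing $g$ to vanish before reaching $t_0$, i.e.\ a crossing, i.e.\ complex zeros, contradicting $t_0>\Lambda_{-D}$. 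The sign of $\Xi''_{t_0}(x_0,\chi)$ at the double root needs a word: if it too vanished we would have a triple root and one repeats the argument at the next nonvanishing derivative; since $\Xi_{t_0}$ is entire and not identically zero this terminates. Modulo this bookkeeping, the argument is exactly the \cite{CSV} mechanism specialized to the degenerate case, and the second sentence of the Lemma ("for $t>\Lambda_{-D}$ zeros are simple") then gives the first sentence by contraposition.
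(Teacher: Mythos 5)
Your route is genuinely different from the paper's. The paper argues \emph{statically at the double root}: it Taylor--expands $\Xi_{t_0\pm\delta^2}(x_0+\epsilon,\chi)$ in $\epsilon$ and $\delta^2$, uses the backward heat equation to convert every $t$-derivative into $-\partial_x^2$, and reads off that the resulting quadratic in $\epsilon$ has discriminant $-2\delta^2\,\Xi''_{t_0}(x_0,\chi)^2+O(\delta^4)<0$ at time $t_0-\delta^2$; hence $\Xi_{t_0-\delta^2}$ already has a complex-conjugate pair of zeros near $x_0$, so $t_0-\delta^2<\Lambda_{-D}$ for all small $\delta$ and $t_0\le\Lambda_{-D}$, with no need to continue zeros in $t$ at all. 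You instead run the zero dynamics of Lemma 2.1 on $(\Lambda_{-D},t_0)$ and derive a contradiction from monotonicity of the gap. In the generic case $\Xi''_{t_0}(x_0,\chi)\ne0$ your mechanism is sound: continuity gives $\Xi''_t$ a fixed sign on a $(t,x)$-neighborhood of $(t_0,x_0)$, strict concavity (or convexity) forces $\Xi'_t(x_{-1}(t))$ and $\Xi'_t(x_1(t))$ to have opposite signs, \eqref{Eq:ode} then gives $\frac{d}{dt}\bigl(x_1(t)-x_{-1}(t)\bigr)>0$, and a positive increasing function on $(t_0-\eta,t_0)$ cannot tend to $0$ as $t\uparrow t_0$. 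This buys a more conceptual ``zeros attract and collide as $t$ decreases'' picture, at the cost of extra bookkeeping that the paper's purely local computation avoids.

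That bookkeeping is where your write-up has real gaps. First, before Lemma 2.1 applies you need the two zeros near $x_0$ (two with multiplicity, by Hurwitz/Rouch\'e) to be \emph{distinct and simple} throughout $(t_0-\eta,t_0)$ --- which is essentially the statement being proved. Your proposed shortcut, that ``the zero-counting function is locally constant so zeros cannot merge,'' is not a fix: Hurwitz counts zeros with multiplicity and is perfectly consistent with two simple zeros merging into a double one. The honest repair is to run the gap argument on each maximal open subinterval where the two zeros are distinct (the same contradiction kills every such subinterval, since the gap must tend to $0$ at its right endpoint), and to dispose of a persistent double root $x(t)$ by noting that then $0=\frac{d}{dt}\Xi_t(x(t),\chi)=-\Xi''_t(x(t),\chi)\ne0$. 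Second, the degenerate case $\Xi''_{t_0}(x_0,\chi)=0$ is not handled by ``repeating the argument at the next nonvanishing derivative'': a zero of multiplicity $m\ge3$ produces $m$ real zeros collapsing onto $x_0$, and the two-zero concavity/gap argument has no direct analogue there (the paper is also terse on this case, but its expansion method does extend to higher order, whereas yours needs a new idea). Third, the ``alternative route'' via a strict form of de Bruijn's theorem assumes exactly what is to be proved (that forward heat flow ``resolves multiplicities'') and should be dropped.
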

\begin{proof}
Suppose first $\Xi^{\prime\prime}_{t_0}(x_0,\chi)\ne 0$.
We compute a Taylor expansion in two variables for the function $\Xi_{t_0\pm\delta^2}(x_0+\epsilon,\chi)$ out to second order terms, and use the backward heat equation to eliminate all partial derivatives with respect to $t$.  Then (up to an error term of third order in $\epsilon$ and $\delta^2$)
\begin{multline*}
\Xi_{t_0\pm\delta^2}(x_0+\epsilon,\chi)\approx \\
\Xi^{\prime\prime}_{t_0}(x_0,\chi)\cdot \left(\frac{\epsilon^2}{2}\mp\delta^2\right)
\mp\Xi^{\prime\prime\prime}_{t_0}(x_0,\chi)\cdot \delta^2\epsilon
+\Xi^{\prime\prime\prime\prime}_{t_0}(x_0,\chi)\cdot\frac{\delta^4}{2}.
\end{multline*}
The right side is a quadratic in $\epsilon$, with discriminant
\begin{equation}\label{Eq:discriminant}
\pm2\delta^2 \cdot \Xi^{\prime\prime}_{t_0}(x_0,\chi)^2+\delta^4\cdot \left(\Xi^{\prime\prime\prime}_{t_0}(x_0,\chi)^2-
\Xi^{\prime\prime}_{t_0}(x_0,\chi)\cdot \Xi^{\prime\prime\prime\prime}_{t_0}(x_0,\chi)\right)
\end{equation}

For $-\delta^2<0$, $\delta\ll 1$, the discriminant (\ref{Eq:discriminant}) is \emph{negative}, and so the quadratic in $\epsilon$ has no real roots but rather two complex conjugate roots.  Hence $t_0< \Lambda_{-D}$.

It is also interesting to note that for $+\delta^2>0$, $\delta \ll 1$, the discriminant is \emph{positive}, and so the quadratic has two simple real roots.  (In fact, for fixed $\delta\ll 1$ we can see explicitly the sign changes in $\Xi_{t_0+\delta^2}(x,\chi)$ for $x$ in the interval $[x_0-2\delta,x_0+2\delta]$.  Up to the error term of third order, our expansion looks like
\begin{align*}
\Xi_{t_0+\delta^2}(x_0-2\delta,\chi&)&&\approx&&\delta^2\cdot \Xi^{\prime\prime}_{t_0}(x_0,\chi)+2\delta^3\cdot \Xi^{\prime\prime\prime}_{t_0}(x_0,\chi)+\frac{\delta^4}{2}\cdot  \Xi^{\prime\prime\prime\prime}_{t_0}(x_0,\chi)\\
\Xi_{t_0+\delta^2}(x_0,\chi&)&&\approx&-&\delta^2\cdot \Xi^{\prime\prime}_{t_0}(x_0,\chi)+\frac{\delta^4}{2}\cdot  \Xi^{\prime\prime\prime\prime}_{t_0}(x_0,\chi)\\
\Xi_{t_0+\delta^2}(x_0-2\delta,\chi&)&&\approx&&\delta^2\cdot \Xi^{\prime\prime}_{t_0}(x_0,\chi)-2\delta^3\cdot \Xi^{\prime\prime\prime}_{t_0}(x_0,\chi)+\frac{\delta^4}{2}\cdot  \Xi^{\prime\prime\prime\prime}_{t_0}(x_0,\chi).
\end{align*}
The dominant $\delta^2$ term changes sign twice.)

If $\Xi^{\prime\prime}_{t_0}(x_0,\chi)=0$, a similar expansion to higher order gives the same result.
\end{proof}

\begin{remark}
Csordas \emph{et al.}\ \cite{CSV} give a different proof of the analogous result, although they remark one may give a proof using the backward heat equation, presumably similar to the above.  Surprisingly, this seems to be the first mention in the de Bruijn Newman constant literature, and the only mention in \cite{CSV}, of the backward heat equation.
\end{remark}

For $\Lambda_{-D}\le t$ and $k>0$, let $x_k(t)$ denote the $k$-th zero (by hypothesis real, simple, positive) of $\Xi_t$, so $x_k(0)=\gamma_k$. Given the symmetry of the zeros, we define $x_{-k}(t)=-x_k(t)$.  From the Hadamard factorization theorem (as a function of $x$ in $\mathbb C$) we have that
\begin{equation}\label{Eq:Hadamard}
\Xi_t(x)=\Xi_t(0)\prod_{k=1}^\infty\left(1-\frac{x^2}{x_k(t)^2}\right).
\end{equation}

We introduce the summation notation $\sum^\prime_j$ where the superscript $\prime$ denotes the sum omitting the (undefined) term with $j=0$, in addition to whatever side condition is additionally imposed on the summation variable.
\begin{lemma}[2.4 \cite{CSV}]  The zeros $x_k(t)$
are the solutions to the initial value problem
\begin{equation}\label{Eq:xode}
x_k^\prime(t)=\sum_{j\ne k}^\prime \frac{2}{x_k(t)-x_j(t)},\qquad x_k(0)=\gamma_k.
\end{equation}
\end{lemma}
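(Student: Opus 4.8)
The plan is to combine the local formula (\ref{Eq:ode}) of Lemma 2.1 with the Hadamard factorization (\ref{Eq:Hadamard}). The initial condition $x_k(0)=\gamma_k$ is immediate from the definition of $x_k$, and by the preceding Lemma on double roots the $x_k(t)$ are real and simple for every $t>\Lambda_{-D}$, so Lemma 2.1 applies at each of them and it remains only to evaluate the ratio $\Xi_t''(x_k(t),\chi)/\Xi_t'(x_k(t),\chi)$. I would get at this by taking the logarithmic derivative of (\ref{Eq:Hadamard}) term by term—legitimate because, $\Xi_t(x,\chi)$ having order at most one, $\sum_k x_k(t)^{-2}<\infty$ and so the product and the resulting series converge locally uniformly off the zeros. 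This gives, for $x$ not a zero,
\[
\frac{\Xi_t'(x,\chi)}{\Xi_t(x,\chi)}=\sum_{k=1}^\infty\frac{2x}{x^2-x_k(t)^2}=\sum_{k=1}^\infty\left(\frac{1}{x-x_k(t)}+\frac{1}{x+x_k(t)}\right)=\sum_j^\prime\frac{1}{x-x_j(t)},
\]
where the last sum is read with $j$ and $-j$ paired (using $x_{-j}(t)=-x_j(t)$), each paired term being of size $O(x_k(t)^{-2})$.

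Next I would isolate the pole at $x_k(t)$. Since $x_k(t)$ is a simple zero, write $\Xi_t(x,\chi)=(x-x_k(t))\,h(x)$ with $h$ holomorphic near $x_k(t)$ and $h(x_k(t))\ne0$; then $\Xi_t'(x_k(t),\chi)=h(x_k(t))$ and $\Xi_t''(x_k(t),\chi)=2h'(x_k(t))$, so that
\[
\frac{\Xi_t''(x_k(t),\chi)}{\Xi_t'(x_k(t),\chi)}=\frac{2h'(x_k(t))}{h(x_k(t))}=2(\log h)'(x_k(t)).
\]
On the other hand $(\log h)'(x)=\Xi_t'(x,\chi)/\Xi_t(x,\chi)-1/(x-x_k(t))=\sum_{j\ne k}^\prime 1/(x-x_j(t))$, which is holomorphic at $x=x_k(t)$; evaluating there and inserting into (\ref{Eq:ode}) yields
\[
x_k'(t)=\frac{\Xi_t''(x_k(t),\chi)}{\Xi_t'(x_k(t),\chi)}=2\sum_{j\ne k}^\prime\frac{1}{x_k(t)-x_j(t)}=\sum_{j\ne k}^\prime\frac{2}{x_k(t)-x_j(t)},
\]
which is (\ref{Eq:xode}).

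I expect the real work to lie in the analytic bookkeeping rather than in this algebra: one must check that the logarithmic-derivative series $\sum_j^\prime(x-x_j(t))^{-1}$, read with the $\pm j$ pairing, both converges and is genuinely obtained from (\ref{Eq:Hadamard}) by termwise differentiation, uniformly on compact sets avoiding the zeros (both facts flowing from $\sum_k x_k(t)^{-2}<\infty$), and one must know that distinct zeros never collide for $t>\Lambda_{-D}$, so that the infinite system (\ref{Eq:xode}) is well posed there—this last point being exactly the content of the preceding Lemma on double roots. It is also worth noting that the derivation degenerates precisely where a collision could occur, which is consistent with that Lemma forcing any such collision to lie at $t\le\Lambda_{-D}$.
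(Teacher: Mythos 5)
Your proposal is correct and follows essentially the same route as the paper: both factor out the simple zero via the Hadamard product, use the identity $f''(w)/f'(w)=2q'(w)/q(w)$ for $f(z)=(z-w)q(z)$, and evaluate the logarithmic derivative of the cofactor at $x_k(t)$, pairing the $\pm j$ terms via $x_{-j}(t)=-x_j(t)$. Your added remarks on convergence of the paired series are a welcome bit of extra care but do not change the argument.
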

\begin{proof}
Suppose $q(z)$ is some function analytic in a domain $D$, and $q(w)\ne0$.  Then for $f(z)=(z-w)q(z)$ we have that
\[
\frac{f^{\prime\prime}(w)}{f^\prime(w)}=2\frac{q^\prime(w)}{q(w)}.
\]
For example, fixing $k$ we have that
\[
\Xi_t(x,\chi)=(x-x_k(t))q_t(x),
\]
where
\[
q_t(x)=-\frac{\Xi_t(0,\chi)}{x_k(t)}\cdot\left(1+\frac{x}{x_k(t)}\right)\prod_{\substack{j=1\\j\ne k}}^\infty\left(1-\frac{x^2}{x_j(t)^2}\right).
\]
Then (writing $w=x_k(t)$)
\[
x_k^\prime(t)=\frac{\Xi_t^{\prime\prime}(w,\chi)}{\Xi_t^\prime(w,\chi)}=\frac{2q_t^\prime(w)}{q_t(w)}
=\frac{2}{w+x_k(t)}+\sum_{\substack{j=1\\j\ne k}}^\infty\frac{2}{w-x_j(t)}+\frac{2}{w+x_j(t)},
\]
and the lemma follows from $x_{-j}(t)=-x_j(t)$.
\end{proof}

\begin{figure}
\begin{center}
\includegraphics[scale=1, viewport=0 0 375 225,clip]{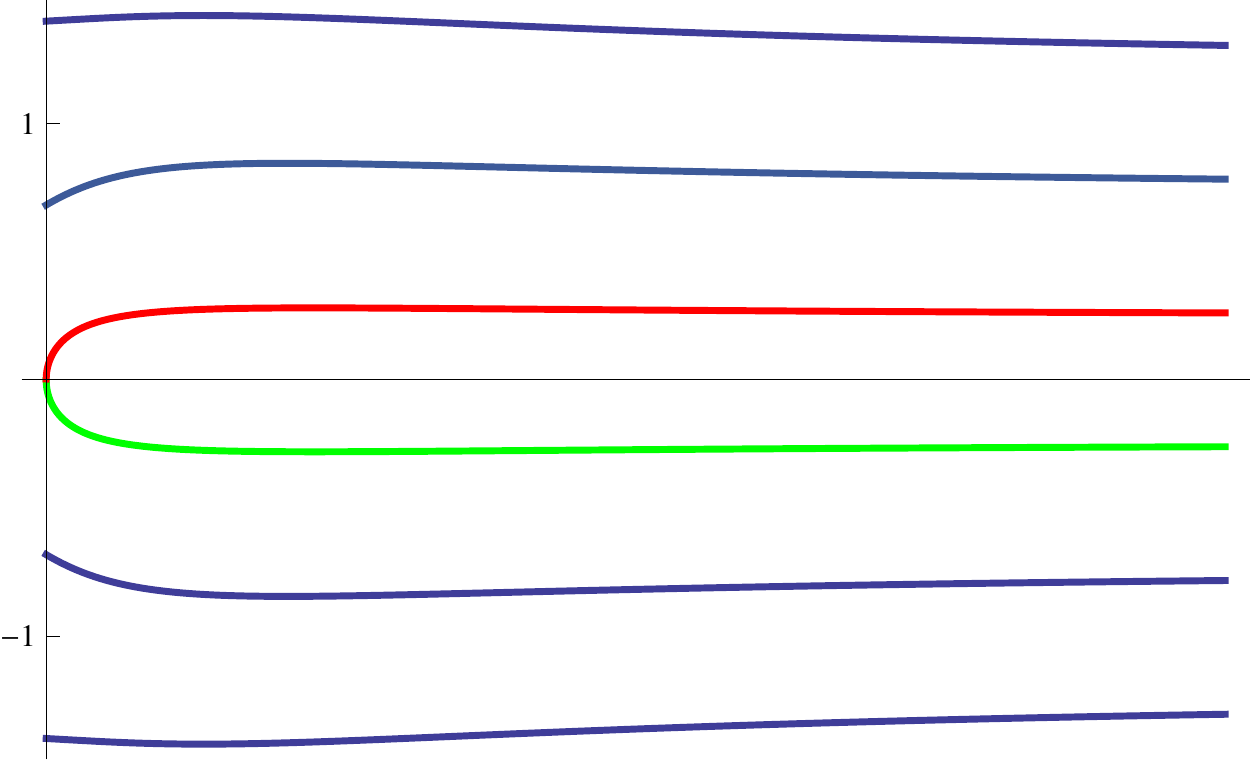}
\caption{Approximate numerical solution of (\ref{Eq:xode}) for $0\le t\le 1$.  Here $-D=-115\,147$, with trajectory of low lying zeros $x_{-1}(t),x_{1}(t)$ shown in green and red.}\label{F:3}
\end{center}
\end{figure}

\begin{lemma}[2.4 \cite{CSV}]  The first zero $x_1(t)$ satisfies the following differential equation:
\begin{equation}\label{Eq:ODE}
x_1(t)^\prime=\frac{1}{x_1(t)}-f(t)x_1(t),
\end{equation}
where
\begin{equation}\label{Eq:fsubk}
f(t)=\sum^\prime_{j\ne -1,1} \frac{2}{(x_{-1}(t)-x_j(t))(x_{1}(t)-x_j(t))}.
\end{equation}
\end{lemma}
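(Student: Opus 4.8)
The plan is to obtain \eqref{Eq:ODE} directly from the general equation \eqref{Eq:xode} of the preceding lemma by specializing to $k=1$ and peeling off the term coming from the mirror zero $x_{-1}(t)=-x_1(t)$. Setting $k=1$ in \eqref{Eq:xode} and splitting off $j=-1$,
\[
x_1'(t)=\sum_{j\ne 1}^\prime\frac{2}{x_1(t)-x_j(t)}=\frac{2}{x_1(t)-x_{-1}(t)}+\sum_{j\ne -1,1}^\prime\frac{2}{x_1(t)-x_j(t)},
\]
and the isolated term is $2/(2x_1(t))=1/x_1(t)$ because $x_{-1}(t)=-x_1(t)$. So it remains only to recognize the tail sum as $-f(t)x_1(t)$.

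Next I would group the tail sum into $\pm j$ pairs for $j\ge 2$. Using $x_{-j}(t)=-x_j(t)$,
\[
\frac{2}{x_1(t)-x_j(t)}+\frac{2}{x_1(t)+x_j(t)}=\frac{4x_1(t)}{x_1(t)^2-x_j(t)^2}.
\]
Applying the same pairing to \eqref{Eq:fsubk}, together with the elementary identity $(x_{-1}(t)-x_j(t))(x_1(t)-x_j(t))=-(x_1(t)+x_j(t))(x_1(t)-x_j(t))=x_j(t)^2-x_1(t)^2$ and the analogous computation for the index $-j$, one sees that each $\pm j$ pair contributes $-4/(x_1(t)^2-x_j(t)^2)$ to $f(t)$. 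Multiplying by $-x_1(t)$ reproduces exactly the paired term displayed above, so the tail sum equals $-f(t)x_1(t)$ and \eqref{Eq:ODE} follows.

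The only point that needs a word of justification is the regrouping of the series, since the sum in \eqref{Eq:xode} is not absolutely convergent term by term (its summands decay only like $1/x_j(t)$, with $x_j(t)$ growing essentially linearly in $j$). However, the pairing used here is precisely the symmetric pairing of $x_j(t)$ with $x_{-j}(t)=-x_j(t)$ already built into the derivation of \eqref{Eq:xode} in the previous lemma, so no genuinely new rearrangement is introduced; moreover the resulting series defining $f(t)$ \emph{is} absolutely convergent, its summands being $O(1/x_j(t)^2)$, which is exactly the point of recasting the equation of motion in the form \eqref{Eq:ODE}. Thus I do not expect a real obstacle: given \eqref{Eq:xode}, the statement is a short algebraic manipulation exploiting the symmetry $x_{-k}(t)=-x_k(t)$, following \cite{CSV}.
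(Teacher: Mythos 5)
Your proof is correct. The paper reaches \eqref{Eq:ODE} by slightly different bookkeeping: it writes \eqref{Eq:xode} for both $k=1$ and $k=-1$, subtracts (so the left side is $2x_1^\prime(t)$), and combines the two $j$-terms over a common denominator, namely (suppressing $t$)
\[
\frac{2}{x_1-x_j}-\frac{2}{x_{-1}-x_j}=\frac{2(x_{-1}-x_1)}{(x_1-x_j)(x_{-1}-x_j)}=-2x_1\cdot\frac{2}{(x_{-1}-x_j)(x_1-x_j)},
\]
so that $f(t)$ appears immediately in the form \eqref{Eq:fsubk} with no further rearrangement. Since $2/(x_{-1}-x_j)=-2/(x_1-x_{-j})$, that combination is term-for-term identical to your pairing of $j$ with $-j$ inside the single sum for $x_1^\prime$; the only practical difference is that your route requires the extra step of re-pairing the terms of \eqref{Eq:fsubk} to recognize the tail as $-f(t)x_1(t)$, which you carry out correctly. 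Your remark on convergence is also the right one, and matches the paper implicitly: the sum in \eqref{Eq:xode} is defined by the symmetric $\pm j$ pairing inherited from the Hadamard product \eqref{Eq:Hadamard}, so no new rearrangement is introduced, and the resulting series for $f(t)$ is absolutely convergent.
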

\begin{proof}
From  (\ref{Eq:xode}) we have
\[
x_1^\prime(t)=\sum_{j\ne 1}^\prime \frac{2}{x_1(t)-x_j(t)}
,\qquad
x_{-1}^\prime(t)=\sum_{j\ne -1}^\prime \frac{2}{x_{-1}(t)-x_j(t)}.
\]
Subtract, and separate out the $j=-1$ term from the first sum and the $j=1$ term from the second to obtain
\begin{multline*}
x_1^\prime(t)-x_{-1}^\prime(t)=
\frac{2}{x_1(t)-x_{-1}(t)}-\frac{2}{x_{-1}(t)-x_1(t)}\\+
\sum^\prime_{j\ne -1,1}\left(
\frac{2}{x_1(t)-x_j(t)}-\frac{2}{x_{-1}(t)-x_j(t)}
\right).
\end{multline*}
The lemma follows from $x_1(t)-x_{-1}(t)=2x_1(t)$.
\end{proof}

\begin{lemma}  The initial value problem given by the ODE (\ref{Eq:ODE}) and $x_1(0)=\gamma_1$ has the solution
\begin{gather}
x_1(t)^2=\exp\left(-F(t)\right)\left(-2\int_t^0\exp\left(F(u)\right)\, dy+\gamma_1^2\right),\label{Eq:IVPsoln}\\
\text{where}\qquad F(t)=-2\int_t^0f(u)\, du.
\end{gather}
\end{lemma}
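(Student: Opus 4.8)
The plan is to linearize the Riccati-type equation (\ref{Eq:ODE}) by the substitution $y(t) = x_1(t)^2$. On the interval $I$ furnished by the earlier lemmas, $x_1(t)$ is a simple real zero, hence differentiable with $x_1(t)\ne 0$, so we may differentiate to get $y'(t) = 2x_1(t)x_1'(t)$; substituting (\ref{Eq:ODE}) the singular term $1/x_1(t)$ multiplies out cleanly:
\[
y'(t) = 2x_1(t)\left(\frac{1}{x_1(t)} - f(t)x_1(t)\right) = 2 - 2f(t)\,y(t).
\]
Thus $y$ satisfies the first-order \emph{linear} ODE $y'(t) + 2f(t)y(t) = 2$ with initial condition $y(0) = \gamma_1^2$.

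Next I would solve this by the integrating-factor method. Note that $F(t) = -2\int_t^0 f(u)\,du = 2\int_0^t f(u)\,du$ has derivative $F'(t) = 2f(t)$ and $F(0) = 0$, so $\exp(F(t))$ is an integrating factor:
\[
\frac{d}{dt}\bigl(\exp(F(t))\,y(t)\bigr) = \exp(F(t))\bigl(y'(t) + 2f(t)y(t)\bigr) = 2\exp(F(t)).
\]
Integrating from $0$ to $t$ and using $F(0) = 0$, $y(0) = \gamma_1^2$ gives
\[
\exp(F(t))\,y(t) = \gamma_1^2 + 2\int_0^t \exp(F(u))\,du = \gamma_1^2 - 2\int_t^0 \exp(F(u))\,du,
\]
and dividing through by $\exp(F(t))$ produces exactly (\ref{Eq:IVPsoln}) (with the evident typo $dy \mapsto du$ corrected).

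There is essentially no obstacle here; this is a routine linear ODE computation. The only points requiring a word of care are (i) the substitution $y = x_1^2$ is legitimate precisely where $x_1$ is differentiable and nonvanishing, which is guaranteed on $I$, so the identity holds there and then extends by continuity to its closure; and (ii) keeping track of the sign convention and the reversed limits of integration in the definition of $F$, so that the constant of integration is pinned down by $y = \gamma_1^2$ at $t = 0$ rather than at another base point. One could alternatively verify (\ref{Eq:IVPsoln}) directly by differentiating the claimed formula and checking the initial condition, but the integrating-factor derivation is cleaner and exhibits where the formula comes from.
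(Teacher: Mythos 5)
Your proposal is correct and is essentially identical to the paper's proof: multiply (or substitute $y=x_1^2$) to turn the Riccati equation into the linear equation $\frac{d}{dt}(x_1^2)+2fx_1^2=2$, apply the integrating factor $\exp(F(t))$ with $F(t)=2\int_0^t f(u)\,du$, integrate from $0$ to $t$, and swap the limits of integration to match the stated form. The remarks about the typo $dy\mapsto du$ and the sign conventions are consistent with what the paper does.
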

\begin{proof}
Multiply (\ref{Eq:ODE}) by $2x_1$ to get
\begin{gather*}
2x_1\cdot x_1^\prime=2-2f\cdot x_1^2\qquad\text{ or }\\
\frac{d}{dt}\left(x_1^2\right)+2fx_1^2=2.
\end{gather*}
An integrating factor is $\exp(F(t))$, where
\[
F(t)=2\int_0^t f(u)\, du.
\]
This gives
\begin{gather*}
\exp(F)\cdot \frac{d}{dt}\left(x_1^2\right)+2f\exp(F)x_1^2=2\exp(F),\\
\frac{d}{dt}\left(\exp(F)x_1^2\right)=2\exp(F),\\
\exp(F(t))x_1(t)^2=2\int_0^t\exp(F(u))\, du+x_1(0)^2.
\end{gather*}
We will be interested in $t<0$, so we swap the limits of integration and introduce the minus sign.
\end{proof}

\begin{remark}
Under the hypothesis that $\Lambda_{-D}\le t$, we have that $x_1(t)^2\ge 0$.  So any estimate for $f(t)$ which can show that
\[
2\int_{t_0}^0\exp\left(F(u)\right)\, du>\gamma_1^2,
\]
proves that $t_0< \Lambda_{-D}$.  This is the main idea of \cite{CSV}.  We now proceed to make such an estimate.
\end{remark}

Csordas \emph{et.\ al.}\ introduce a function analogous to
\begin{equation}\label{Eq:gsubk}
g(t)=\sum_{j\ne -1,1}^\prime\frac{1}{(x_{-1}(t)-x_j(t))^2}+\frac{1}{(x_{1}(t)-x_j(t))^2},
\end{equation}
\lq\lq in order to make the analysis of the movement of the zeros ... more tractable\rq\rq.  For $\Lambda_{-D}<t$ it is elementary that
\[
0<f(t)<g(t).
\]
They are able to show that
\begin{lemma}[2.5 \cite{CSV}] For $\Lambda_{-D}<t$
\[
g^\prime(t)>-8g(t)^2.
\]
\end{lemma}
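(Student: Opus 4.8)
The plan is to differentiate $g(t)$ term by term and feed in the coupled system (\ref{Eq:xode}) for the zeros. For $\Lambda_{-D}<t$ the zeros $x_j(t)$ are real and simple, and from the order-one Hadamard factorization (\ref{Eq:Hadamard}) one checks that $g(t)$ together with all the double sums that appear below converge absolutely and locally uniformly in $t$; this licenses differentiating under the summation sign. From
\[
\frac{d}{dt}\,\frac{1}{(x_{\pm1}-x_j)^{2}}=\frac{-2\bigl(x_{\pm1}'(t)-x_j'(t)\bigr)}{(x_{\pm1}(t)-x_j(t))^{3}}
\]
(where $x_{\pm1}$ denotes either of the two close zeros $x_{-1},x_1$), and then substituting the values of $x_{\pm1}'$ and $x_j'$ from (\ref{Eq:xode}), one writes $g'(t)$ as a double sum over the zeros.

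Next I would peel off the \emph{resonant} contributions. Writing $x_{-1}'-x_j'=\sum^\prime_{i\ne-1}\frac{2}{x_{-1}-x_i}-\sum^\prime_{i\ne j}\frac{2}{x_j-x_i}$, pull the $i=j$ term out of the first sum and the $i=-1$ term out of the second, and collapse the remainder using $\frac{2}{x_{-1}-x_i}-\frac{2}{x_j-x_i}=\frac{-2(x_{-1}-x_j)}{(x_{-1}-x_i)(x_j-x_i)}$; this gives
\[
x_{-1}'-x_j'=\frac{4}{x_{-1}-x_j}-2(x_{-1}-x_j)\sum^\prime_{i\ne-1,j}\frac{1}{(x_{-1}-x_i)(x_j-x_i)},
\]
and likewise for $x_1'-x_j'$. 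Dividing each by $(x_{\pm1}-x_j)^{3}$, multiplying by $-2$, and summing over $j\ne\pm1$ and over the two choices $x_{-1},x_1$, the resonant pieces assemble into $-8$ times a fourth-power sum while the remainder becomes a double sum:
\[
g'(t)=-8\sum^\prime_{j\ne-1,1}\!\left[\frac{1}{(x_{-1}-x_j)^{4}}+\frac{1}{(x_1-x_j)^{4}}\right]+4\,B(t),
\]
where $B(t)=B_{-1}(t)+B_{1}(t)$ and $B_{\pm1}(t)=\sum^\prime_{j\ne-1,1}(x_{\pm1}-x_j)^{-2}\sum^\prime_{i\ne\pm1,j}\frac{1}{(x_{\pm1}-x_i)(x_j-x_i)}$.

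The hard part will be showing $B(t)\ge0$, and this is where the special role of the pair $\{x_{-1},x_1\}$ enters. I would split each inner sum according to whether the index $i$ is the \emph{opposite} special index: peel the $i=1$ term out of $B_{-1}$ and the $i=-1$ term out of $B_1$. On what remains, $i$ and $j$ both range over $\mathbb{Z}\setminus\{0,\pm1\}$, a domain symmetric under $i\leftrightarrow j$; symmetrizing in $i,j$ and using $\frac{1}{x_{-1}-x_j}-\frac{1}{x_{-1}-x_i}=\frac{x_j-x_i}{(x_{-1}-x_i)(x_{-1}-x_j)}$ collapses this contribution to $\frac{1}{2}\sum_{i\ne j}(x_{-1}-x_i)^{-2}(x_{-1}-x_j)^{-2}\ge0$, and the same with $x_1$ in place of $x_{-1}$. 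The two peeled-off terms carry prefactors $\frac{1}{x_{-1}-x_1}$ and $\frac{1}{x_1-x_{-1}}$ of opposite sign, so they combine, and one more partial-fraction collapse identifies their sum with $\sum^\prime_{j\ne-1,1}(x_{-1}-x_j)^{-2}(x_1-x_j)^{-2}\ge0$. Hence $B(t)$ is a sum of three nonnegative quantities. To conclude, put $a_j=(x_{-1}-x_j)^{-2}>0$ and $b_j=(x_1-x_j)^{-2}>0$: the off-diagonal cross terms make the bound $\sum^\prime_j(a_j^{2}+b_j^{2})<\bigl(\sum^\prime_j(a_j+b_j)\bigr)^{2}=g(t)^{2}$ strict, so $-8$ times the fourth-power sum above strictly exceeds $-8g(t)^{2}$, whence $g'(t)>-8g(t)^{2}+4B(t)\ge-8g(t)^{2}$. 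The one genuine subtlety throughout is the absolute, locally uniform convergence underpinning the term-by-term differentiation and the rearrangements of the double sums.
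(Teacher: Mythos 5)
Your argument is correct and is essentially the paper's own proof: the decomposition of $g'$ into the $-8\sum\bigl((x_{-1}-x_j)^{-4}+(x_1-x_j)^{-4}\bigr)$ piece, the cross-term sum $\sum_j (x_{-1}-x_j)^{-2}(x_1-x_j)^{-2}$ obtained from the peeled $i=\mp 1$ terms, and the symmetrized-in-$(i,j)$ sum of squares all match the paper's $A(t)$, $C(t)$, $D(t)$, and the final comparison with $g(t)^2$ is the same. The only differences are cosmetic (you simplify $x_{\pm1}'-x_j'$ before inserting it into the sum) plus your added care about absolute convergence, which the paper leaves implicit.
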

This is a little technical so the proof is deferred.
From the lemma immediately follows
\begin{gather*}
\frac{1}{g(0)}-\frac{1}{g(t)}=-\int_t^0\frac{g^\prime(u)}{g(t)^2}\, du<8\int_t^0\, du=-8t,\\
g(t)<\frac{g(0)}{1+8g(0) t}\qquad\text{as long as}\qquad\frac{-1}{8g(0)}<t.
\end{gather*}
In turn, this gives
\begin{multline*}
F(t)=-2\int_t^0f(u)\, du> -2\int_t^0g(u)\, du>\\
-2\int_t^0\frac{g(0)}{1+8g(0) u}\, du=\frac{1}{4}\log\left(1+8g(0) t\right),
\end{multline*}
and
\begin{multline}\label{Ineq:key}
2\int_t^0\exp\left(F(u)\right)\, du
>2\int_t^0\left(1+8g(0) t\right)^{1/4}\, du\\
=\frac{\left(1-\left(1+8 g(0) t\right)^{5/4}\right)}{5g(0)}.
\end{multline}

\begin{theorem}\label{Theorem1}
Let $-D$ be any discriminant for which
\begin{equation}\label{Eq:Lowdef}
5\gamma_1^2 g(0)<1.
\end{equation}
Then by choosing in the inequality (\ref{Ineq:key})  the value of $t$ to be
\begin{equation}\label{Eq:lambdadef}
\lambda=\frac{(1-5\gamma_1^2 g(0))^{4/5}-1}{8g(0)},
\end{equation}
(note $-1/8g(0)<\lambda$) we have from \eqref{Ineq:key} that
\[
2\int_{\lambda}^0\exp\left(F(u)\right)du>\gamma_1^2.
\]
Then (\ref{Eq:IVPsoln}) shows that
\[
x_1^2(\lambda)<0\quad\text{ and so }\quad \lambda<\Lambda_{-D}\le \lkr.
\]
\end{theorem}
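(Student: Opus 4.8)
The plan is to treat Theorem~\ref{Theorem1} as the ``payoff'' step: all of the analytic work is already contained in the preceding lemmas, so what remains is to feed the prescribed value of $t$ into the key inequality (\ref{Ineq:key}) and then invoke the Remark following (\ref{Eq:IVPsoln}). Concretely, I would argue by contradiction: suppose $\Lambda_{-D}\le\lambda$. Then the zero-trajectory $x_1(t)$ and the auxiliary functions $f$, $g$, $F$ are defined throughout $(\Lambda_{-D},0]\ni\lambda$, the hypotheses of Lemma~2.5 hold there, and hence so does the chain of estimates culminating in (\ref{Ineq:key}); the goal is to derive $x_1(\lambda)^2<0$, contradicting the reality of the zero $x_1(\lambda)$.

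First I would verify that the value $\lambda$ in (\ref{Eq:lambdadef}) is admissible. Since $g(0)>0$ and $\gamma_1\ne0$, the hypothesis (\ref{Eq:Lowdef}) places $1-5\gamma_1^2 g(0)$ strictly between $0$ and $1$, so $(1-5\gamma_1^2 g(0))^{4/5}\in(0,1)$ as well; by (\ref{Eq:lambdadef}), $1+8g(0)\lambda=(1-5\gamma_1^2 g(0))^{4/5}$, which gives at once $-1/(8g(0))<\lambda$ (legitimizing (\ref{Ineq:key}) at $t=\lambda$) and $\lambda<0$ (so that $\lambda$ really lies in $(\Lambda_{-D},0]$). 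Next comes the one genuine computation: raising $1+8g(0)\lambda=(1-5\gamma_1^2 g(0))^{4/5}$ to the power $5/4$ yields $(1+8g(0)\lambda)^{5/4}=1-5\gamma_1^2 g(0)$, so that the right-hand side of (\ref{Ineq:key}) evaluated at $t=\lambda$ is exactly
\[
\frac{1-(1+8g(0)\lambda)^{5/4}}{5g(0)}=\frac{5\gamma_1^2 g(0)}{5g(0)}=\gamma_1^2 .
\]
Because (\ref{Ineq:key}) is a strict inequality, this gives $2\int_\lambda^0\exp(F(u))\,du>\gamma_1^2$.

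To finish, substitute this into the solution formula (\ref{Eq:IVPsoln}): $\exp(-F(\lambda))>0$ while $\gamma_1^2-2\int_\lambda^0\exp(F(u))\,du<0$, so $x_1(\lambda)^2<0$. This contradicts $x_1(\lambda)^2\ge0$, which holds because under the standing hypothesis $\Lambda_{-D}\le\lambda$ the value $x_1(\lambda)$ is a real zero of $\Xi_\lambda(x,\chi)$. Hence $\lambda<\Lambda_{-D}$, and $\Lambda_{-D}\le\lkr$ is immediate from the definition of $\lkr$ as the supremum of the $\Lambda_{-D}$ over fundamental discriminants $-D$.

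I do not anticipate a real obstacle: the substantive content lives in the earlier lemmas (above all Lemma~2.5 and the integrating-factor solution (\ref{Eq:IVPsoln})), and the argument above is essentially bookkeeping together with one application of the power law $x\mapsto x^{5/4}$. The only place that calls for slight care is the degenerate boundary case $\Lambda_{-D}=\lambda$, where $f,g,F,x_1$ are defined only on the \emph{open} interval $(\lambda,0]$ and the integrals in (\ref{Ineq:key}) and (\ref{Eq:IVPsoln}) must be understood as one-sided limits; this is handled by applying the strict inequality on $(\lambda,0]$ and letting $t\downarrow\lambda$, and in any event even the weaker conclusion $\lambda\le\Lambda_{-D}$ already furnishes the asserted bound $\lambda\le\lkr$.
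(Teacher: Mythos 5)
Your proposal is correct and follows essentially the same route as the paper, which embeds the argument in the theorem statement itself: the choice of $\lambda$ makes the right side of (\ref{Ineq:key}) equal exactly $\gamma_1^2$, the strict inequality then forces $x_1(\lambda)^2<0$ via (\ref{Eq:IVPsoln}), contradicting reality of the zeros for $t\ge\Lambda_{-D}$. Your added care about admissibility of $\lambda$ (i.e.\ $-1/(8g(0))<\lambda<0$) and the boundary case $\Lambda_{-D}=\lambda$ is a harmless refinement of the same argument.
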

It will be helpful to have the series expansion
\begin{equation}\label{Eq:lambdaseries}
\lambda=-\frac{1}{2}\gamma_1^2\left(1+ \frac{\gamma_1^2 g(0)}{2}+O\left(\gamma_1^4g(0)^2\right)\right).
\end{equation}
\begin{remark}
Were we to continue to follow \cite{CSV}, the formal definition of Low discriminant would be any $-D$ for which (\ref{Eq:Lowdef}) holds.
It is reasonable to wonder, to what extent is such a definition natural, v. motivated by what we are able to prove?  How much do we give up by replacing $f(t)$ by $g(t)$?  How much do we give up when we use the bound $g^\prime(t)>-8g(t)^2$?  The interested reader will be able to verify, imitating what we did above, that any estimate of the form
\[
f^\prime(t)>-cf(t)^2,\qquad c>0
\]
would lead to a lower bound
\[
\lambda_c =\frac{(1-\frac{c+2}{c}\gamma_1^2 f(0))^{\frac{c}{c+2}}-1}{c\,f(0)}<0
\]
as long as
\[
\frac{c+2}{2}\gamma_1^2 f(0)<1.
\]
As above, it is useful to consider a series expansion for $\lambda_c$, to see the sensitivity to the various parameters.  We have
\[
\lambda_c=-\frac{1}{2}\gamma_1^2\left(1+ \frac{\gamma_1^2 f(0)}{2}+\left(\frac{1}{3}+\frac{c}{12}\right)\gamma_1^4 f(0)^2+O\left(c^2\gamma_1^6f(0)^3\right)\right)
\]
Thus we see that if we were able to prove a stronger theorem, we could make a definition that allowed more examples, but at the end of the day the bound we get from any example is still $\approx -1/2\, \gamma_1^2$.

Rather than give an \emph{ad hoc} definition which makes the theorem go through, we will postpone making a definition of Low discriminant until we have more insight.  Our definition will actually give \emph{fewer} examples, but (we hope) indicate why there might be infinitely many such.
\end{remark}

\subsubsection*{Proof of Lemma 2.5 \cite{CSV}}
We have (suppressing dependence on $t$)
\[
g^\prime(t)=-2\sum_{j\ne-1,1}^\prime\frac{x_{-1}^\prime-x_j^\prime}{(x_{-1}-x_j)^3}+\frac{x_{1}^\prime-x_j^\prime}{(x_{1}-x_j)^3}.
\]
Repeated applications of (\ref{Eq:xode}) show that we can write
\begin{multline*}
g^\prime(t)=-4\sum_{j\ne-1,1}^\prime\frac{1}{(x_{-1}-x_j)^3}\left(\sum_{i\ne-1}^\prime\frac{1}{x_{-1}-x_i}-\sum_{i\ne j}^\prime\frac{1}{x_j-x_i}\right)\\
-4\sum_{j\ne-1,1}^\prime\frac{1}{(x_{1}-x_j)^3}\left(\sum_{i\ne1}^\prime\frac{1}{x_{1}-x_i}-\sum_{i\ne j}^\prime\frac{1}{x_j-x_i}\right).
\end{multline*}
In the four inner sums, separate out the $i=j, i=-1, i=j, i=1$ terms to see that we can write 
\[
g^\prime(t)=A(t)+B(t),
\]
where
\[
A(t)=-8\sum_{j\ne-1,1}^\prime \frac{1}{(x_{-1}-x_j)^4}+\frac{1}{(x_{1}-x_j)^4}
\]
and
\begin{multline*}
B(t)=4\sum_{j\ne-1,1}^\prime\frac{1}{(x_{-1}-x_j)^2}\sum_{i\ne-1,j}\frac{1}{(x_{-1}-x_i)(x_j-x_i)}\\
+4\sum_{j\ne-1,1}^\prime\frac{1}{(x_{1}-x_j)^2}\sum_{i\ne 1,j}\frac{1}{(x_{1}-x_i)(x_j-x_i)}.
\end{multline*}
In $B(t)$ we separate out the $i=1$ term in the first double sum, and the $i=-1$ term in the second double sum to get that $B(t)=C(t)+D(t)$, where
\[
C(t)=4\sum_{j\ne-1,1}^\prime\frac{1}{(x_{-1}-x_j)^2(x_{1}-x_j)^2}
\]
and
\begin{multline*}
D(t)=4\sum_{j\ne-1,1}^\prime \sum_{i\ne-1,1,j}^\prime
\left\{
\frac{1}{(x_{-1}-x_j)^2(x_{-1}-x_i)(x_j-x_i)}\right.\\
+
\left.\frac{1}{(x_{1}-x_j)^2(x_{1}-x_i)(x_j-x_i)}
\right\}
\end{multline*}
We rewrite $D(t)$ as $D(t)/2+D(t)/2$, the sum of two (identical) double sums, and interchange the roles of $i$ and $j$ in the second:
\begin{multline*}
D(t)=2\sum_{j\ne-1,1}^\prime \sum_{i\ne-1,1,j}^\prime
\left\{
\frac{1}{(x_{-1}-x_j)^2(x_{-1}-x_i)(x_j-x_i)}\right.\\
+
\left.\frac{1}{(x_{1}-x_j)^2(x_{1}-x_i)(x_j-x_i)}
\right\}\\
+2\sum_{i\ne-1,1}^\prime \sum_{j\ne-1,1,i}^\prime
\left\{
\frac{1}{(x_{-1}-x_i)^2(x_{-1}-x_j)(x_i-x_j)}\right.\\
+
\left.\frac{1}{(x_{1}-x_i)^2(x_{1}-x_j)(x_i-x_j)}
\right\}.
\end{multline*}
Both double sums range over the same set of indices: all distinct $i, j$ taken from $\mathbb Z\backslash \{-1,0,1\}$.  So we may combine the first and third fraction over a common denominator, and also the second and fourth to get
\[
D(t)=2\sum_{j\ne-1,1}^\prime \sum_{i\ne-1,1,j}^\prime\frac{1}{(x_{-1}-x_j)^2(x_{-1}-x_i)^2}+\frac{1}{(x_{1}-x_j)^2(x_{1}-x_i)^2}.
\]
Both $C(t)$ and $D(t)$ are positive for $\Lambda_{-D}<t$, so $B(t)>0$ and
\[
g^\prime(t)>A(t)>-8g(t)^2.
\]
\qed

Following \cite{CSV} we can conclude
\begin{theorem}\label{Theorem2}
Suppose there exist infinitely many discriminants satisfying (\ref{Eq:Lowdef}).  Then $0\le\Lambda_{Kr}$.
\end{theorem}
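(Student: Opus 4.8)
The plan is to combine Theorem~\ref{Theorem1}, Siegel's bound (\ref{Eq:Siegel}), and the definition of $\lkr$ as a supremum. Let $\{-D_n\}_{n\ge 1}$ be an infinite family of distinct fundamental discriminants, each satisfying $5\gamma_1^2 g(0)<1$, and list them so that $D_1<D_2<\cdots$; since only finitely many fundamental discriminants lie below any bound, $D_n\to\infty$. For each $n$, Theorem~\ref{Theorem1} produces a number
\[
\lambda_n=\frac{(1-5\gamma_1^2 g(0))^{4/5}-1}{8g(0)}
\]
(here $\gamma_1=\gamma_1(-D_n)$ and $g(0)$ both depend on $n$) for which $\lambda_n<\Lambda_{-D_n}\le\lkr$. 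Hence it suffices to prove $\lambda_n\to 0$.

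The first step is to bound $|\lambda_n|$ in terms of $\gamma_1(-D_n)$ alone, thereby eliminating the \emph{a priori} uncontrolled factor $g(0)$. Setting $y_n=5\gamma_1(-D_n)^2 g(0)\in(0,1)$, the formula above rearranges to $\lambda_n=-\tfrac{5}{8}\gamma_1(-D_n)^2\cdot\frac{1-(1-y_n)^{4/5}}{y_n}$, and since $\tfrac{4}{5}\le\frac{1-(1-y)^{4/5}}{y}<1$ for $y\in(0,1)$ (Bernoulli's inequality on one side, the monotonicity of $a\mapsto a^{4/5}$ on $(0,1)$ on the other), we get $\tfrac{1}{2}\gamma_1(-D_n)^2\le|\lambda_n|<\gamma_1(-D_n)^2$. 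Equivalently one may just read $|\lambda_n|=O(\gamma_1(-D_n)^2)$ off the series expansion (\ref{Eq:lambdaseries}), the Low condition $\gamma_1^2 g(0)<1/5$ bounding the error term. In any case $-\gamma_1(-D_n)^2<\lambda_n<0$.

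The second step brings in Siegel. Because $D_n\to\infty$, the estimate (\ref{Eq:Siegel}) applies for all large $n$, giving $\gamma_1(-D_n)<4/\log\log\log D_n\to 0$, hence $\lambda_n\to 0^-$. Since $\lkr\ge\Lambda_{-D_n}>\lambda_n$ for every $n$ and $\lambda_n\to 0$, it follows that $\lkr\ge 0$, which is the assertion $0\le\Lambda_{Kr}$.

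The argument is short; the only point that calls for care is the passage $\gamma_1(-D_n)\to 0$. This does \emph{not} follow from the Low condition $5\gamma_1^2 g(0)<1$ by itself --- that would require knowing $g(0)\to\infty$ as $D\to\infty$, i.e.\ that the lowest zeros genuinely become denser with $D$ --- but it is supplied unconditionally, and essentially for free, by Siegel's theorem once the discriminants are known to tend to infinity. Everything else is bookkeeping with the closed form (\ref{Eq:lambdadef}) for $\lambda$.
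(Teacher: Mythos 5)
Your proof is correct and follows essentially the same route as the paper: rewrite $\lambda$ as $\gamma_1^2$ times a bounded function of $u=5\gamma_1^2 g(0)\in(0,1)$, then let Siegel's bound (\ref{Eq:Siegel}) force $\gamma_1(-D_n)\to 0$ along the infinite family. Incidentally, your constant $\tfrac{5}{8}$ (giving $|\lambda|\in[\tfrac12\gamma_1^2,\tfrac58\gamma_1^2)$) is the correct one, consistent with the expansion (\ref{Eq:lambdaseries}); the paper's displayed factor $\tfrac{5}{16}$ and the bounds $-5/16<f(u)<-1/4$ appear to be off by a factor of $2$, a harmless typo.
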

\begin{proof} We have that for
\begin{gather*}
0<5\gamma_1(-D)^2 g(0,-D)\overset{\text{def.}}=u<1\\
\intertext{ we have}
\frac{\lambda(-D)}{\gamma_1(-D)^2}=\frac{5}{16}\cdot\frac{\left((1-u)^{4/5}-1\right)}{u}.
\end{gather*}
The function $f(u)$ on the right side above satisfies 
\[
-5/16<f(u)<-1/4\quad\text {for }\quad0<u<1.
\]
Since $\gamma_1(-D)^2\to 0$ via (\ref{Eq:Siegel}), then $\lambda(-D)\to 0$ as well.
\end{proof}

\begin{table}
\begin{center}
\begin{tabular}{ l c c c} 
$-D$ &          $\gamma_1$  & $\gamma_1\cdot \log(D/2\pi)$&$-\gamma_1^2/2$ \\ \hline\hline
 $-163$&            $0.202901$&$0.66062$&$-2.05844\cdot 10^{-2}$ \\
 $-1411$&          $0.077967$&$0.04221$&$-3.03943\cdot 10^{-3}$ \\
 $-17923$&        $0.030986$&$0.24652$&$-4.80057\cdot 10^{-4}$\\
 $-115147$&      $0.003158$&$0.03099$&$-4.98648\cdot 10^{-6}$ \\
 $-175990483$&$0.000475$&$0.00814$&$-1.12813\cdot 10^{-7}$ \\&&&
\end{tabular}
\caption{Examples of discriminants with low lying zeros.}\label{Ta:Low}
\end{center}
\end{table}

\subsection*{Numerical experiments}  Because of the applications to bounds on class numbers of positive definite binary quadratic forms \cite{MW, Watkins2},  examples of fundamental discriminants with low lying zeros are well studied; several are shown in Table \ref{Ta:Low}.  
Via (\ref{Eq:lambdaseries}) we expect $\lambda\approx -\gamma_1^2/2$ to be a lower bound.  Observe that from (\ref{Eq:lambdadef}) we have
\[
\frac{d\lambda}{d g(0)}=\frac{-1+g(0) \gamma_1 ^2+(1-5 g(0) \gamma_1 ^2)^{1/5}}{8 g(0)^2(1-5 g(0) \gamma_1 ^2)^{1/5}},
\]
and $-1 + y + (1 - 5 y)^{1/5}<0$ for $0<y<1/5$ implying that $\lambda$ is a decreasing function of $g(0)$.  Thus to get a lower bound for $\lambda$ it suffices to upper bound $g(0)$.  Furthermore, (\ref{Eq:lambdaseries}) indicates that the value of $\lambda$ is relatively insensitive to the tightness of this bound.

Via $\gamma_{-j}=-\gamma_j$ we determine that
\begin{align*}
g(0)=&2\sum_{j=2}^\infty\frac{1}{(\gamma_j+\gamma_1)^2}+\frac{1}{(\gamma_j-\gamma_1)^2}\\
=&2\sum_{j=2}^\infty\frac{1}{\gamma_j^2}\cdot \left(\frac{1}{(1+\gamma_1/\gamma_j)^2}+\frac{1}{(1-\gamma_1/\gamma_j)^2}\right).
\end{align*}
Let $N$ be such that $\gamma_N>1$.  Since 
\[
\frac{1}{(1+y)^2}+\frac{1}{(1-y)^2}=2\cdot \frac{1+y^2}{(1-y^2)^2}
\]
 is an increasing function on $(0,1)$, we can bound $g(0)$  by replacing $\gamma_1/\gamma_j$ by $\gamma_1$ for those $j\ge N$.  Thus 
 \begin{multline}\label{Eq:gbound}
 g(0)\le 2\sum_{2\le j<N}\frac{1}{(\gamma_j+\gamma_1)^2}+\frac{1}{(\gamma_j-\gamma_1)^2}\\
 +4\frac{1+\gamma_1^2}{(1-\gamma_1^2)^2}\,\sum_{N\le j}^\infty\gamma_j^{-2}.
 \end{multline}

From the Hadamard product
\[
\Xi(t,\chi)=\Xi(0,\chi)\prod_{j=1}^\infty\left(1-\frac{t^2}{\gamma_j^2}\right),
\]
we see that
\[
-\frac{1}{2}\frac{\Xi^{\prime\prime}(0,\chi)}{\Xi(0,\chi)}=\sum_{j=1}^\infty \gamma_j^{-2}.
\]
So to bound $g(0)$ it suffices to compute only the first $N$ zeros, and then numerically integrate the moments
\begin{equation}\label{Eq:momentdef}
\Xi(0,\chi)=\int_0^\infty \Phi(u,\chi) \, du,\qquad \Xi^{\prime\prime}(0,\chi)=-\int_0^\infty u^2\Phi(u,\chi) \, du,
\end{equation}
where recall that $\Phi(u,\chi)$ is defined by (\ref{Eq:phidef}).  Now (\ref{Eq:gbound}) becomes
\[
g(0)\le g(0)_{\text{bound}},
\]
where
 \begin{multline}\label{Eq:gbound1}
 g(0)_{\text{bound}}= 2\sum_{2\le j<N}\frac{1}{(\gamma_j+\gamma_1)^2}+\frac{1}{(\gamma_j-\gamma_1)^2}\\
 -\frac{1+\gamma_1^2}{(1-\gamma_1^2)^2}\,\left(2\frac{\Xi^{\prime\prime}(0,\chi)}{\Xi(0,\chi)}+4\sum_{1\le j<N}\gamma_j^{-2}\right).
 \end{multline}

It is easy to compute the moments \eqref{Eq:momentdef} in \emph{Mathematica}; we need only convince the reader we can bound the truncation error in the improper integral and infinite series.  We have
\[
\left|\Phi(u,\chi)\right|<\int_1^\infty x\exp(3/2 u-x^2\exp(2u)/D)\, dx<D\exp(-\exp(2u)/D).
\]
Thus we can bound the tails of the integrals
\begin{multline*}
\int_U^\infty \Phi(u,\chi)\, du<\int_U^\infty u^2 \Phi(u,\chi)\, du<\int_U^\infty Du^2 \exp(-\exp(2u)/D) \, du\\
<\int_U^\infty D\exp(2u) \exp(-\exp(2u)/D) \, du<D\exp(-\exp(2U)/D).
\end{multline*}
We see that for $U=\log(D\log(2\cdot 10^{15}D^2))$, the truncation error in the improper integral is less than $5\cdot 10^{-16}$.  Next we desire to bound the tail of the infinite series in order to compute $\Phi(u,\chi)$ with an error of less than $5\cdot 10^{-16}/U$, in order that the accumulated error in the integral over $[0,U]$  is less than $5\cdot 10^{-16}$.  Again we estimate
\begin{multline*}
\left|\sum_{n=N}^\infty \chi(n)n\exp(3u/2-n^2\pi\exp(2u)/D)\right|\\
<\int_{x=N}^\infty x\exp(3/2 u-x^2\exp(2u)/D)\, dx\\
<D \exp(-N^2 \exp2 u)/D).
\end{multline*}
Thus we need
\begin{align*}
N(u)=&D^{1/2}\exp(-u)\log(2\cdot 10^{15}D U )^{1/2}\\
=&D^{1/2}\exp(-u)\log(2\cdot 10^{15}D \log(D\log(2\cdot 10^{15}D^2)) )^{1/2}
\end{align*}
terms of the series, as a function of the variable $u$.  The error in computing the moments is less than $10^{-15}$.

\begin{table}
\begin{center}
\begin{tabular}{ l c c } 
$-D$ &          $\gamma_1$  &$\lambda$ \\ \hline\hline
 $-163$&            $0.202901$&$-2.15787\cdot 10^{-2}$ \\
 $-1411$&          $0.077967$&$-3.07533\cdot 10^{-3}$ \\
  $-17923$&        $0.030986$&$-4.81901\cdot 10^{-4}$\\
 $-115147$&      $0.003158$&$-4.98563\cdot 10^{-6}$ \\
 $-175990483$&$0.000475$&$-1.12929\cdot 10^{-7}$ \\&&
\end{tabular}
\caption{Examples of discriminants with low lying zeros, and corresponding bound on $\lkr$.}\label{Ta:Low2}
\end{center}
\end{table}

We compute in Table \ref{Ta:Low2} that each of the discriminants in Table \ref{Ta:Low} satisfies (\ref{Eq:Lowdef}), and give the corresponding lower bound of Theorem \ref{Theorem1} on $\lkr$.  These computations were verified  in several ways:
\begin{enumerate}
\item Values of Dirichlet $L$-functions are independently implemented in \emph{Mathematica}.
The zero moment 
$
\Xi(0,\chi)
$ was compared to
\[
\quad(D/\pi)^{3/4}\Gamma(3/4)L(1/2,\chi),
\]
giving the same values (to 25 digits).
\item The first $10^4$ zeros were computed with \texttt{lcalc}.  The ratio 
\[
-\frac{1}{2}\frac{\Xi^{\prime\prime}(0,\chi)}{\Xi(0,\chi)}
\quad\text{ was compared to }\quad
\sum_{j=1}^{10^4} \gamma_j^{-2},
\]
with good accuracy.
\item  The upper bound $g(0)_{\text{bound}}$ was compared to a numerical estimate of $g(0)$ using the same first $10^4$ zeros, and achieved the desired inequality.
\item Finally, in all cases we observe that $\lambda\approx -\gamma_1/2$, as predicted.
\end{enumerate}

The example of $-D=-175990483$ required extra care, in that $g(0)_{\text{bound}}$ is the difference of two very large but approximately equal numbers, leading to potentially significant cancellation error.  The package \texttt{lcalc}, even compiled with double precision, did not compute zeros to sufficient accuracy.  Instead the method of \cite{Stopple1, Stopple2} was used to compute the zeros with $\gamma<1$ to $25$ digits of accuracy.  In this example, we obtain that 
\[
5\gamma_1^2\cdot g(0)_{\text{bound}}=0.00008,
\]
 sufficiently less than $1$ that we are confident of the results.

\begin{theorem}  
We have that $-D=-175990483$ satisfies (\ref{Eq:Lowdef}), and the corresponding zero gives the bound 
\[
-1.12929\cdot 10^{-7}<\lkr.
\]
\end{theorem}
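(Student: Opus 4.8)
The plan is to apply Theorem \ref{Theorem1} to $-D = -175990483$, so the whole task reduces to a rigorous numerical verification of the hypothesis (\ref{Eq:Lowdef}) together with the evaluation of $\lambda$ in (\ref{Eq:lambdadef}). First I would compute the lowest zero $\gamma_1$ of $L(s,\chi)$ for this discriminant. Because $D$ is very large and $\gamma_1 \approx 0.000475$ is extremely small, a standard package such as \texttt{lcalc} compiled in double precision does not locate it to enough significant digits; instead I would use the contour / smoothed explicit-formula methods of \cite{Stopple1, Stopple2} to obtain $\gamma_1$, and in fact all zeros with $\gamma < 1$, to roughly $25$ decimal digits.

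Next I would bound $g(0)$ from above using (\ref{Eq:gbound1}). This needs (i) the finitely many zeros $\gamma_2,\dots,\gamma_{N-1}$ with $\gamma_j<1$, again from \cite{Stopple1, Stopple2}; and (ii) the two moments $\Xi(0,\chi)=\int_0^\infty\Phi(u,\chi)\,du$ and $\Xi''(0,\chi)=-\int_0^\infty u^2\Phi(u,\chi)\,du$, computed in \emph{Mathematica}. The truncation analysis already carried out above --- cutting the $u$-integral at $U=\log(D\log(2\cdot10^{15}D^2))$ and truncating the $n$-sum in $\Phi$ at $N(u)=D^{1/2}\exp(-u)\log(2\cdot10^{15}DU)^{1/2}$ --- keeps the total error in each moment below $10^{-15}$, which is what licenses using $-\frac12\Xi''(0,\chi)/\Xi(0,\chi)=\sum_j\gamma_j^{-2}$ to replace the infinite tail $\sum_{j\ge N}\gamma_j^{-2}$ appearing in (\ref{Eq:gbound}). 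Assembling these ingredients yields $5\gamma_1^2 g(0)_{\text{bound}} = 0.00008 < 1$, which establishes (\ref{Eq:Lowdef}).

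Finally, with (\ref{Eq:Lowdef}) in hand, Theorem \ref{Theorem1} gives at once $\lambda < \Lambda_{-D} \le \lkr$, where $\lambda$ is the explicit quantity (\ref{Eq:lambdadef}). Substituting the computed $\gamma_1$ and $g(0)_{\text{bound}}$ --- and using that $\lambda$ is a decreasing function of $g(0)$, so that an upper bound on $g(0)$ yields a valid lower bound on $\lambda$ --- produces $\lambda = -1.12929\cdot10^{-7}$, hence $-1.12929\cdot10^{-7} < \lkr$; the series expansion (\ref{Eq:lambdaseries}), predicting $\lambda\approx-\gamma_1^2/2\approx-1.128\cdot10^{-7}$, serves as a consistency check, as does comparing $g(0)_{\text{bound}}$ against a direct estimate of $g(0)$ from the first $10^4$ computed zeros.

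The main obstacle is purely numerical: $g(0)_{\text{bound}}$ in (\ref{Eq:gbound1}) is a difference of two large, nearly equal quantities (the moment term $\approx 4\frac{1+\gamma_1^2}{(1-\gamma_1^2)^2}\sum_{j\ge1}\gamma_j^{-2}$ against $4\frac{1+\gamma_1^2}{(1-\gamma_1^2)^2}\sum_{1\le j<N}\gamma_j^{-2}$), so to be certain the product with $5\gamma_1^2$ stays below $1$ one needs far more precision in the low zeros than off-the-shelf software provides --- this is exactly why the $25$-digit method of \cite{Stopple1, Stopple2} is essential here rather than \texttt{lcalc}. Everything else is bookkeeping: the rigorous tail bounds recorded above, plus substitution into the closed form (\ref{Eq:lambdadef}).
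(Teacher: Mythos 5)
Your proposal is correct and follows essentially the same route as the paper: verify (\ref{Eq:Lowdef}) numerically via the upper bound $g(0)_{\text{bound}}$ of (\ref{Eq:gbound1}), using the moments \eqref{Eq:momentdef} with the stated truncation bounds and the $25$-digit zero computations of \cite{Stopple1, Stopple2} to control the cancellation, then invoke Theorem \ref{Theorem1} together with the monotonicity of $\lambda$ in $g(0)$. The paper's argument is exactly this, including the observation that $5\gamma_1^2\, g(0)_{\text{bound}}=0.00008$ leaves a comfortable margin below $1$.
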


\subsection*{Random matrix theory}
In \cite{Odlyzko, Odlyzko1} Odlyzko presents heuristic arguments that random matrix theory predictions for the Riemann zeros lead one to believe Newman's conjecture $\Lambda\ge 0$.  It seems to be difficult to make these more than heuristic.  The present case of quadratic Dirichlet $L$-functions appears to be different, because the functional equation for $Z(t,\chi)$ transposes our close pair of zeros $\gamma_1$ and $-\gamma_1$.  This distinction is largely the motivation for the present paper.

The standard conjectures \cite{KS} predict that low lying zeros of quadratic Dirichlet $L$-functions should be distributed according to a symplectic random matrix model.  In particular, \cite{KS} shows that there exists a probability measure $\nu(-,j)$ such that
\[
\lim_{N\to\infty}\nu_j(USp(2N))=\nu(-,j),
\]
where $\nu_j(USp(2N))$ gives the distribution of the $j$-th eigenvalue of a random matrix from $USp(2N)$.  If we normalize the zeros via $\widetilde \gamma_j=\gamma_j\log(D/2\pi)$, then as $D$ varies the $\widetilde \gamma_j$ are predicted be distributed according to $\nu(-,j)$.  

We can now re-write
\[
g(0)=2\sum_{j=2}^\infty\gamma_j^{-2}\cdot \left(\frac{1}{(1+\widetilde\gamma_1/\widetilde\gamma_j)^2}+\frac{1}{(1-\widetilde\gamma_1/\widetilde\gamma_j)^2}\right)
\]
We suppose an extra condition on the discriminants: that $\widetilde\gamma_j\ge 1$ for $j\ge 2$.  A positive proportion\footnote{In fact, for most discriminants since the mean of $\nu(-,2)$ is about $1.76\ldots$} of discriminants are predicted to satisfy this. 
(This holds for all discriminants in Table \ref{Ta:Low}.)\ \   
  Under this hypothesis we can bound $g(0)$ by replacing $\widetilde\gamma_1/\widetilde\gamma_j$ by $\widetilde\gamma_1$.  Thus 
 \begin{equation}\label{Eq:gboundtilde}
 g(0)\le 4\frac{1+\widetilde\gamma_1^2}{(1-\widetilde\gamma_1^2)^2}\,\sum_{j=2}^\infty\gamma_j^{-2}.
 \end{equation}

We can use the fact that for $y\ge 0$
\[
1-3y^2\le \frac{(1-y^2)^2}{1+y^2},
\]
and rearrange the terms in (\ref{Eq:Lowdef}) and (\ref{Eq:gboundtilde}) to see that a sufficient condition for (\ref{Eq:Lowdef}) is that $\widetilde\gamma_2(-D)\ge 1$ and
\[
-\frac{1}{2} \cdot \frac{\Xi^{\prime\prime}(0,\chi)}{\Xi(0,\chi)}  <\frac{21}{20}\cdot \gamma_1(-D)^{-2}-\frac{3}{20}\cdot \log(D/2\pi)^2.
\]
(For context, observe the expression on the left, as a series, begins with $\gamma_1(-D)^{-2}$.)  

Now $\Xi^{\prime\prime}/\Xi(0,\chi)$ differs from $\log(L(1/2,\chi))^{\prime\prime}$ by the derivative of the digamma function at $3/4$:
\[
-\frac{1}{4}\psi^\prime(3/4)\approx -0.63547\ldots,\quad\text{where}\quad \psi(z)=\frac{\Gamma^\prime(z)}{\Gamma(z)}.
\]
Furthermore we have that for $D>100$,
\[
\frac{1}{20}\left(\log(D/2\pi)\right)^2>\frac{1}{8}\psi^\prime(3/4).
\]
\begin{definition} We call a fundamental discriminant $-D<0$  a \textsc{Low discriminant}, if $\gamma_2(-D)\log(D/2\pi)\ge 1$ and
\begin{equation}\label{Eq:Low3}
-\frac{1}{2} \cdot \log\left(L(1/2,\chi)\right)^{\prime\prime}  <\frac{21}{20}\cdot \gamma_1(-D)^{-2}
-\frac{1}{5}\cdot \log(D/2\pi)^2.
\end{equation}
This is a sufficient condition for (\ref{Eq:Lowdef}) to hold.  Table \ref{Ta:Low3} shows examples; note this criterion \emph{fails} for $-D=-163$.
\end{definition}

This definition is motivated by the random matrix theory.  In \cite{KeSn}, the authors use the characteristic polynomial of a random matrix from $USp(2N)$, with $2N\approx \log(D/2\pi)$ to model $L(1/2+i t,\chi)$.  One might hope to show that a random matrix analog of (\ref{Eq:Low3}) holds with a positive probability.
This would give, under random matrix theory predictions for the distributions of the zeros, an infinite sequence of Low discriminants with associated
$
\lambda(-D)
$
lower bound for $\Lambda_{Kr}$.  By Theorem \ref{Theorem2}, random matrix theory predictions for the distribution of the zeros would imply the Generalized Newman Conjecture.

\begin{table}
\begin{center}
\begin{tabular}{ l c c } 
$-D$ &          $-0.5\log\left(L(1/2,\chi)\right)^{\prime\prime}$  &$1.05 \gamma_1^{-2}
-0.2\log(D/2\pi)^2$ \\ \hline\hline
 $-163$&            $25.0367$&$23.3845$ \\
 $-1411$&          $165.731$&$166.867$ \\
  $-17923$&        $1043.82$&$1080.95$\\
 $-115147$&      $100299.$&$105291.$ \\
 $-175990483$&$4.4276\cdot 10^6$&$4.6489\cdot 10^6$ \\&&
\end{tabular}
\caption{Examples of Low discriminants.}\label{Ta:Low3}
\end{center}
\end{table}

\subsubsection*{Acknowledgements}  Thanks to Steven J. Miller for his careful reading of the manuscript and helpful suggestions.

\end{document}